\newtheorem{definition}{Definition}
\newtheorem{corollary}{Corollary}
\newtheorem{proposition}{Proposition}
\newtheorem{lemma}{Lemma}
\newtheorem{example}{Example}
\newtheorem{remark}{Remark}
\newtheorem{algorithm}{Algorithm}
\newcommand{\MyBib}{./MyBib}
\newcommand{\InclTDC}
\numberwithin{equation}{section}
\numberwithin{theorem}{section}
\numberwithin{corollary}{section}
\numberwithin{definition}{section}
\numberwithin{lemma}{section}
\numberwithin{remark}{section}
\numberwithin{example}{section}
\newcommand{\cF}{\mathcal{F}}
\newcommand{\cE}{\mathcal{E}}
\newcommand{\FF}{\mathbb{F}}
\renewcommand{\R}{\mathbf{R}}
\DeclareMathOperator{\E}{\mathbb{E}}
\DeclareMathOperator{\e}{\textrm{e}}
\DeclareMathOperator{\sign}{\textrm{sign}}
\newcommand{\beq}{\begin{equation}}
\newcommand{\eeq}{\end{equation}}
\newcommand{\beqn}{\begin{eqnarray}}
\newcommand{\eeqn}{\end{eqnarray}}
\newcommand{\bfig}{\begin{figure}}
\newcommand{\efig}{\end{figure}}
\newcommand{\btab}{\begin{table}}
\newcommand{\etab}{\end{table}}
\title{Piecewise Constant Martingales and Lazy Clocks}
\author{Christophe PROFETA\footnote{Laboratoire de Math\'ematiques et de Mod\'elisation d'\'Evry (LaMME), Université d'\'Evry, CNRS. Boulevard de France 23, 91025, Evry , France. E-mail: \texttt{christophe.profeta@univ-evry.fr}.}~~\& Fr\'ed\'eric VRINS\footnote{Louvain Finance Center and CORE, Université catholique de Louvain. Voie du Roman Pays 34, 1348 Belgium. E-mail: \texttt{frederic.vrins@uclouvain.be}.}}
\date{First version: 18th October 2016. This version: \today}
\begin{document}
\maketitle
\begin{abstract} This paper discusses the possibility to find and construct \textit{piecewise constant martingales}, that is, martingales with piecewise constant sample paths evolving in a connected subset of $\mathbb{R}$. After a brief review of standard possible techniques, we propose a construction based on the sampling of latent martingales $\tilde{Z}$ with \textit{lazy clocks} $\theta$. These $\theta$ are time-change processes staying in arrears of the true time but that can synchronize at random times to the real clock. This specific choice makes the resulting time-changed process  $Z_t=\tilde{Z}_{\theta_t}$ a martingale (called a \textit{lazy martingale}) without any assumptions on $\tilde{Z}$, and in most cases, the lazy clock $\theta$ is adapted to the filtration of the lazy martingale $Z$. This would not be the case if the stochastic clock $\theta$ could be ahead of the real clock, as typically the case using standard time-change processes. The proposed approach yields an easy way to construct analytically tractable lazy martingales evolving on (intervals of) $\mathbb{R}$.
\end{abstract}

\textit{Keywords: Martingales with jumps, Time changes, Last passage times, AMS60:G17, G44, J75}\bigskip

The authors are grateful to M. Jeanblanc, D. Brigo and K. Yano for stimulating discussions about an earlier version of this manuscript. This research benefited from the support of the ``\textit{Chaire March\'es en Mutation}'', F\'ed\'eration Bancaire Fran\c{c}aise.


\section{Introduction}

In the literature, \textit{pure jump processes} defined on a filtered probability space $(\Omega,\cF,\FF,\Pr)$, where $\FF:=(\cF_t,0\leq t\leq T)$ and $\cF:=\cF_T$, are often referred to as stochastic processes having no diffusion part. In this paper we are interested in a subclass of pure jump (PJ) processes: \textit{piecewise constant (PWC) martingales} defined as follows. 
\begin{definition}[Piecewise constant martingale]
A piecewise constant $\FF$-martingale $Z$ is a c\`adl\`ag $\FF$-martingale whose jumps $\Delta Z_s=Z_s-Z_{s^-}$  are summable (i.e. $\sum_{s\leq T} |\Delta Z_s|<+\infty$ a.s.) and such that for every $t\in [0,T]$ :
$$Z_t =Z_0 + \sum_{s\leq t} \Delta Z_s\;.$$
In particular, the sample paths $Z(\omega)$ for $\omega \in \Omega$ belong to the class of piecewise constant functions of time.
\end{definition}
Note that an immediate consequence of this definition is that a PWC martingale has finite variation.
Such type of processes may be used to represent martingales observed under partial (punctual) information, e.g. at some (random) times. One possible field of application is mathematical finance, where discounted price processes are martingales under an equivalent measure. Without additional information, a reasonable approach may consist in assuming that discounted prices remain constant between arrivals of market quotes, and jump to the level given by the new quote when a new trade is done. More generally, this could represent conditional expectation processes (i.e. ``best guess'') where information arrives in a random and discontinuous way.  An interesting application in that respect is the modeling of \textit{quoted recovery rates}. They correspond to the market's view of a firm's recovery rate $R$ upon default. Being conditional expectations of random variables in $[0,1]$ associated to remote events, they are martingales evolving in the unit interval, whose trajectories remain constant for long period of times, but jumps from time to time, when dealers update their views to specialized data providers.\medskip

Pure jump martingales can easily be obtained by taking the difference of a pure jump increasing process with a predictable, grounded, right-continuous process of bounded variation (called \textit{compensator}). The simplest example is probably the compensated Poisson process of parameter $\lambda$ defined by $(M_t=N_t-\lambda t, \,t\geq0)$. This process is a pure jump martingale with piecewise \textit{linear} sample paths, hence is not a PWC martingale as $\sum_{s\leq t} \Delta M_s = N_t \neq M_t$.  
Clearly, not all martingales having no diffusion term are piecewise linear. For example, the Az\'ema martingale $M$ defined as 
\beqn
M_t&:=&\E[W_t|\sigma(\sign(W_s))_{s\leq t}]=\sign(W_t)\sqrt{\frac{\pi}{2}}\sqrt{t-g_t^0(W)}\;,\nonumber\\
g_t^0(W)&:=&\sup \{s\leq t, W_s=0\}\label{eq:azema}
\eeqn
where $W$ is a Brownian motion, is essentially piecewise square-root (see e.g. Section 8 of~\cite{Prott05} for a detailed analysis of this process). 
Similarly, the Geometric Poisson Process $\e^{N_t\log(1+\sigma)-\lambda\sigma t}$ is a positive martingale with piecewise negative exponential sample paths~\cite[Ex 11.5.2]{Shrev04}. \medskip

In Section 2, we present several routes to construct PWC martingales. We then introduce a different approach in Section 3, adopting a time-changed technique. This method proves to be very flexible as the time-changed and the latent processes have the same range (if not time-dependent).

\section{Piecewise constant martingales}

Most of the ``usual'' martingales with no diffusion term fail to have piecewise constant sample paths. However, finding such type of processes is not difficult. We provide below three different methods to construct such type of processes. Yet, not all are equally powerful in terms of tractability. The last method proves to be quite appealing in that it yields PWC martingales whose range can be any connected set. 

\subsection{An autoregressive construction scheme}
We start by looking at a subset of PWC martingales, namely step martingales. These are martingales  whose paths belong to the space of step functions on any bounded interval. As a consequence, a step martingale $Z$ admits a finite number of jumps on $[0,T]$ taking places at, say $(\tau_k,\; k\geq 1)$, and may be decomposed as (with $\tau_0:=0$) 
$$Z_t = Z_0+\sum_{k=1}^{+\infty} (Z_{\tau_k} - Z_{\tau_{k-1}})1_{\{\tau_k \leq t\}}\;.$$

Looking at such decomposition, we see that step martingales may easily be constructed by an autoregressive scheme.

\begin{proposition}
Let $(M_n, n\in \mathbb{N})$ be a martingale such that $\sup_{i\geq1}\E[|M_i-M_{i-1}|]<+\infty.$ 
Let $(\tau_k,\; k\geq 1)$ be an increasing sequence of random times, independent from $M$, and set $A_t:=\sum_{k=1}^{+\infty}1_{\{\tau_k \leq t\}}$. We assume that $\E[A_t]<+\infty$. Then, the process
$$Z_t := M_0 +\sum_{k=1}^{+\infty} (M_k - M_{k-1})1_{\{\tau_k \leq t\}}= M_{A_t} $$
is a step martingale with respect to its natural filtration.
\end{proposition}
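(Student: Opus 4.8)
The plan is to show that $Z_t := M_{A_t}$ is integrable, adapted, and satisfies the martingale property with respect to its natural filtration $\mathbb{G} = (\mathcal{G}_t)$ where $\mathcal{G}_t := \sigma(Z_s, s \leq t)$. The key structural observation is that $A_t$ is a counting process independent of $M$, so conditioning on the value of $A_t$ reduces statements about $Z_t$ to statements about the discrete-time martingale $M$ at a fixed (but random, independent) index.

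First I would establish integrability. Since $Z_t = M_{A_t}$ and $M_0 + \sum_{k=1}^{A_t}(M_k - M_{k-1})$ telescopes to $M_{A_t}$, I would bound $\E[|Z_t|] \leq \E[|M_0|] + \E\left[\sum_{k=1}^{\infty} |M_k - M_{k-1}| 1_{\{\tau_k \leq t\}}\right]$. Using independence of $(\tau_k)$ from $M$ and the tower property, this sum factorizes: each term becomes $\E[|M_k - M_{k-1}|]\,\Pr(\tau_k \leq t)$. The uniform bound $\sup_i \E[|M_i - M_{i-1}|] =: c < \infty$ then gives $\E[|Z_t|] \leq \E[|M_0|] + c\sum_{k=1}^{\infty} \Pr(\tau_k \leq t) = \E[|M_0|] + c\,\E[A_t] < \infty$, which is exactly where the hypotheses $\sup_i \E[|M_i - M_{i-1}|]<\infty$ and $\E[A_t]<\infty$ are both consumed.

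Next I would verify the martingale property $\E[Z_t \mid \mathcal{G}_s] = Z_s$ for $s \leq t$. The cleanest route is to condition on the jump times as well, i.e. to work on the enlarged filtration generated jointly by $M$ and $A$, and then argue that the natural filtration of $Z$ is contained in it so that the martingale property descends by projection. On the enlarged filtration, knowing $A$ up to time $t$ fixes the indices, and the increment $Z_t - Z_s = \sum_{k: s < \tau_k \leq t}(M_k - M_{k-1})$ has conditional expectation zero because each $M_k - M_{k-1}$ is a martingale increment of $M$, which is independent of the (now conditioned-upon) jump times. I would make this rigorous by decomposing over the event $\{A_s = m, A_t = n\}$ and using that $\E[M_n - M_m \mid \mathcal{F}^M_m] = 0$ together with the independence of $A$ and $M$.

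The main obstacle will be the final descent step: showing that conditioning on the coarser natural filtration $\mathcal{G}_s$ of $Z$ alone (rather than on the richer joint information $\sigma(M_u, A_u; u \leq s)$) still yields $Z_s$. Since $(\tau_k)$ is only assumed independent of $M$ and not necessarily observable from $Z$, one must check that $\mathcal{G}_s \subseteq \sigma(M_u, A_u; u \leq s)$ and invoke the tower property $\E[Z_t \mid \mathcal{G}_s] = \E[\E[Z_t \mid \sigma(M_u,A_u;u\leq s)] \mid \mathcal{G}_s] = \E[Z_s \mid \mathcal{G}_s] = Z_s$. This is clean provided $Z_s$ is $\mathcal{G}_s$-measurable, which holds by construction; the delicate point worth stating carefully is that independence of $(\tau_k)$ from $M$ is precisely what guarantees the increment's conditional mean vanishes even after one forgets the exact jump locations, so I would make the role of independence explicit rather than leave it implicit in the factorization.
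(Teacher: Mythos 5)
Your proof is correct and follows essentially the same route as the paper: the integrability bound via independence and $\sum_{k}\Pr(\tau_k\leq t)=\E[A_t]$ is identical, and your conditioning on the joint information of $M$ and the clock $A$ followed by projection onto the natural filtration of $Z$ is precisely the content of the paper's one-line remark that the martingale property is an immediate consequence of the increasing time change $A$. You simply spell out the details the paper omits; there is no gap.
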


\begin{proof}
We first have
$$\E[|Z_t|] \leq \E[|M_0|]+\left(\sup_{i\geq1}\E[|M_i-M_{i-1}|]\right) \sum_{k=1}^{+\infty} \Pr(\tau_k\leq t) <+\infty$$
which proves that $Z_t$ is integrable. The martingale property is then an immediate consequence of the increasing time change $A$.
\end{proof}

\begin{example}\label{exa:sumcox}
Let $N$ be a Cox process with intensity $\lambda=(\lambda_t)_{t\geq 0}$ and $\tau_1,\ldots,\tau_{N_t}$ be the sequence of jump times of $N$ on $[0,t]$ with $\tau_0:=0$. If $(Y_k, k\geq 1)$ is a family of independent and centered random variables, then 
$$Z_t:=Z_0+\sum_{k=1}^\infty Y_{k}1_{\{\tau_k\leq t\}}=Z_0+\sum_{k=1}^{N_t} Y_{k}~,~~~Z_0\in\mathbb{R}$$
is a PWC martingale. Note that we may choose the range of such a PWC martingale by taking bounded random variables. For instance, if $Z_0=0$ and for any $k\geq 1$,
$$\Pr\left(\frac{6a}{\pi^2 k^2} \leq Y_k \leq \frac{6b}{\pi^2 k^2}  \right) =1$$
with $a<0<b$, then for any $t\geq0$, we have $Z_t\in[a,b]$ a.s.
\end{example}

%
%
%
%
%
%

The above construction scheme provides us with a simple method to construct PWC martingales. Yet, it suffers from two restrictions. First, the distribution of $Z_t$ requires averaging the conditional distribution with respect to the Poisson distribution of rate $\lambda$, i.e. an infinite sum. Second, a control on the range of the resulting martingale requires  strong assumptions. 
In Example \ref{exa:sumcox}, the $Y_i$'s are independent but their support decreases as $1/k^2$. One might try to relax the independence assumption by drawing $Y_i$ from a distribution whose support is state dependent like $[a-Z_{\tau_{i-1}}, b-Z_{\tau_{i-1}}]$, in which case $Z_t\in [a,b]$ for all $t\in [0,T]$. By doing so however, we typically loose the tractability of the distribution. In Example \ref{exa:sumcox} for instance, the characteristic function can be found in closed form, but it features an infinite sum (over the Poisson states) of products (of increasing size) of characteristic functions associated to the random variables $(Y_i)$. In the sequel, we address these drawbacks by proposing another construction scheme, that would provide us with more tractable expressions.


\subsection{PWC martingales from PJ martingales with vanishing compensator} 

As hinted in the introduction, PWC martingales can be easily obtained by taking the difference of two pure jump processes whose compensators cancel out. We start by looking at subordinators. 
\begin{lemma}[Pure jump martingales constructed from subordinators]
Let $J^1$ and $J^2$ be two i.i.d. subordinators, with characteristic exponent :$$\varphi_{J_t^{1}}(u):=\E\left[e^{i u J_t^1 }\right] = \exp\left(- t \int_0^{+\infty} (1-e^{iu x}) \nu(dx)\right).$$
We assume that the L\'evy measure $\nu$ satisfies the integrability condition $\int_0^{+\infty} x \nu(dx)<+\infty$. Then,
$Z:=J^1-J^2$ is a PWC symmetric martingale whose characteristic function is given by $$\varphi_{Z_t}(u) = \exp\left(-2t \int_0^{+\infty} (1-\cos(u x)) \nu(dx)\right)\;.$$
\end{lemma}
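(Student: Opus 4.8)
The plan is to establish the three claims in turn: that $Z$ is a martingale, that it is piecewise constant and symmetric, and that its characteristic function has the stated form. First I would address integrability. A subordinator is an increasing L\'evy process, and the integrability condition $\int_0^{+\infty} x\,\nu(dx)<+\infty$ guarantees that each $J^i_t$ has finite first moment; indeed, differentiating the characteristic exponent at $u=0$ gives $\E[J^i_t]=t\int_0^{+\infty} x\,\nu(dx)<+\infty$. Hence $Z_t=J^1_t-J^2_t$ is integrable for every $t$.

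\begin{proof}
Since $J^1$ and $J^2$ are i.i.d.\ subordinators, they are increasing L\'evy processes, hence pure jump (a subordinator with finite first moment and no drift has no diffusion part). The integrability assumption $\int_0^{+\infty} x\,\nu(dx)<+\infty$ ensures that each $J^i_t$ is integrable, with $\E[J^i_t]=t\int_0^{+\infty}x\,\nu(dx)$; consequently $Z_t=J^1_t-J^2_t$ is integrable. To see that $Z$ is a martingale with respect to its natural filtration, note that $J^1$ and $J^2$ have independent and stationary increments, so for $s\leq t$,
$$\E[Z_t-Z_s\mid \cF_s]=\E[J^1_t-J^1_s]-\E[J^2_t-J^2_s]=(t-s)\int_0^{+\infty}x\,\nu(dx)-(t-s)\int_0^{+\infty}x\,\nu(dx)=0,$$
the two compensators cancelling exactly because $J^1$ and $J^2$ are identically distributed. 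Since $J^1$ and $J^2$ are both pure jump increasing processes with $J^i_t=\sum_{s\leq t}\Delta J^i_s$ and summable jumps (by the integrability condition), their difference $Z$ satisfies $Z_t=Z_0+\sum_{s\leq t}\Delta Z_s$ with summable jumps, so $Z$ is a PWC martingale. Symmetry follows from the fact that $-Z=J^2-J^1\overset{d}{=}J^1-J^2=Z$, the two subordinators being identically distributed and independent. Finally, by independence and the explicit characteristic exponent,
$$\varphi_{Z_t}(u)=\E\!\left[e^{iuJ^1_t}\right]\E\!\left[e^{-iuJ^2_t}\right]=\exp\!\left(-t\int_0^{+\infty}(1-e^{iux})\nu(dx)\right)\exp\!\left(-t\int_0^{+\infty}(1-e^{-iux})\nu(dx)\right).$$
Combining the two integrals and using $e^{iux}+e^{-iux}=2\cos(ux)$ gives
$$\varphi_{Z_t}(u)=\exp\!\left(-t\int_0^{+\infty}\bigl(2-e^{iux}-e^{-iux}\bigr)\nu(dx)\right)=\exp\!\left(-2t\int_0^{+\infty}(1-\cos(ux))\nu(dx)\right),$$
which is the claimed form.
\end{proof}

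The parts requiring genuine care, rather than routine verification, are the justification that each $J^i_t$ is integrable with mean $t\int_0^{+\infty}x\,\nu(dx)$ under the stated L\'evy measure condition, and the verification that the jumps of $Z$ are summable so that it qualifies as PWC rather than merely pure jump; both follow from $\int_0^{+\infty}x\,\nu(dx)<+\infty$, which controls the jump activity near the origin and at infinity. Everything else is a direct consequence of the independence and stationarity of L\'evy increments together with the explicit form of the characteristic exponent.
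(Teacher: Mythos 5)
Your proof is correct and follows essentially the same route as the paper's: integrability and the decomposition $J^i_t=\sum_{s\leq t}\Delta J^i_s$ both deduced from $\int_0^{+\infty}x\,\nu(dx)<+\infty$, with the martingale property coming from the cancellation of the two compensators $t\int_0^{+\infty}x\,\nu(dx)$. You merely spell out the symmetry and characteristic-function computations that the paper leaves implicit.
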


\begin{proof}
Observe first that the assumption $\int_1^{+\infty} x\nu(dx)<+\infty$ implies that $J^1$ is integrable, while $\int_0^{1} x\nu(dx)<+\infty$ implies that  $J^1$  admits the decomposition  $J_t^1 = \sum_{s\leq t}\Delta J_s$, see \cite[p.15]{Bert96}. The result then follows from the fact that $M_t^1=  \sum_{s\leq t} \Delta J_s^1 - t \int_0^{+\infty} x \nu(dx)$ is a martingale.
\end{proof}

As obvious examples, one can mention the difference of two independent Gamma or Poisson processes of same parameters. Note that stable subordinators are not allowed here, as they do not fulfill the integrability condition. We give below the probability density of these two examples :

\begin{example}
Let $N^1,N^2$ be two independent Poisson processes with parameter $\lambda$. Then, $Z:=N^1-N^2$ is a step martingale taking integer values, with marginals given by the Skellam distribution with parameters $\mu_1=\mu_2=\lambda$ :
\beq
f_{Z_t}(k) = e^{-2\lambda t}I_{|k|}(2\lambda t),\qquad k\in \mathbb{Z}\;,
\eeq
where $I_{k}$ is the modified Bessel function of the first kind.
\end{example}
\noindent

\begin{example}
Let $\gamma^1,\gamma^2$ be two independent Gamma processes with parameters $a,b>0$. Then, $Z:=\gamma^1-\gamma^2$ is a PWC martingale with marginals given by
\beq
f_{Z_t}(z)=   \frac{b}{\sqrt{\pi}\Gamma(at)} \left|\frac{bz}{2}\right|^{at-\frac{1}{2}}  K_{\frac{1}{2}-at}\left(b|z|\right)\;,
\eeq
where $K_\beta$ denotes the modified Bessel function of the second kind with parameter $\beta\in \R$.
\end{example}
\begin{proof}
The probability density of $Z_t$ is given, for $2at>1$, by the inverse Fourier transform, see \cite[p.349 Formula 3.385(9)]{GraRyz} :
$$
f_{Z_t}(z) = \frac{1}{2\pi} \int_\mathbb{R}  \frac{e^{-iu z}}{\left(1+i\frac{u}{b}\right)^{at}\left(1-i\frac{u}{b}\right)^{at}} du\;.
$$
The result then follows by analytic continuation.
\end{proof}

Note that more generally, a similar proof allows to characterize the centered L\'evy processes which are PWC martingales.
\begin{proposition}\label{prop:levy}
A centered L\'evy process $L$ is a PWC martingale if and only if it has no drift, no Brownian component and its L\'evy measure $\nu$ satisfies the integrability condition 
$\int_\mathbb{R} |x| \nu(dx)<\infty$, i.e. its L\'evy triple is $(0,0,\nu)$ with $\nu$ integrable as above.
\end{proposition}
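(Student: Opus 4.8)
The plan is to rely on the L\'evy--It\^o decomposition and to establish the two implications separately, reusing the summability argument already employed in the proof of the previous lemma. Throughout, I read \emph{centered} in the sense $\E[L_t]=0$ for every $t$, so that an integrable $L$ is automatically a martingale; the real content of the statement is then the equivalence between the path property (PWC) and the structural conditions encoded in the triple $(0,0,\nu)$. Taking $L_0=0$ without loss of generality, I recall that a L\'evy process admits the representation
\[
L_t = b\,t + \sigma W_t + \int_0^t\!\!\int_{|x|\leq 1} x\,\tilde N(ds,dx) + \int_0^t\!\!\int_{|x|> 1} x\,N(ds,dx),
\]
where $N$ is the jump measure and $\tilde N$ its compensated version.

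For the sufficiency I would assume $\sigma=0$, no drift, and $\int_{\mathbb{R}}|x|\nu(dx)<\infty$. The condition $\int_{|x|\leq1}|x|\nu(dx)<\infty$ lets me uncompensate the small jumps exactly as in the previous lemma (see \cite[p.15]{Bert96}), so that $L$ has finite variation and reduces to $L_t=\sum_{s\leq t}\Delta L_s$, the vanishing of the linear term being precisely the ``no drift'' hypothesis. Since $\E\big[\sum_{s\leq t}|\Delta L_s|\big]=t\int_{\mathbb{R}}|x|\nu(dx)<\infty$, the jumps are absolutely summable almost surely, so $L$ is PWC; being centered and integrable, it is a martingale.

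For the necessity I would start from a PWC martingale $L$. First, PWC forces finite variation, and a finite-variation martingale has no continuous martingale part, so comparison with the decomposition above (equivalently, matching the continuous part of the quadratic variation $[L]_t=\sigma^2 t+\sum_{s\leq t}(\Delta L_s)^2$) yields $\sigma=0$. Next, $L_t=\sum_{s\leq t}\Delta L_s$ with $\sum_{s\leq t}|\Delta L_s|<\infty$ a.s.; for a L\'evy process this almost-sure finiteness is equivalent to $\int_{|x|\leq1}|x|\nu(dx)<\infty$, while integrability of the martingale gives $\int_{|x|>1}|x|\nu(dx)<\infty$, so that $\nu$ is integrable. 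Finally, the identity $L_t=\sum_{s\leq t}\Delta L_s$ leaves no room for a linear term, so the drift vanishes and the triple is $(0,0,\nu)$; note that centering then reads $\int_{\mathbb{R}}x\,\nu(dx)=0$, which is automatically consistent with $L_t=\sum_{s\leq t}\Delta L_s$ having zero mean.

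I expect the main obstacle to be the rigorous exclusion of the Brownian component in the necessity direction: one must argue that the piecewise constant (hence finite-variation) nature of the paths is incompatible with a nonzero diffusion coefficient. The cleanest route is the uniqueness of the continuous local-martingale part, or equivalently the quadratic-variation comparison above, since a nonzero $\sigma$ contributes an absolutely continuous term $\sigma^2 t$ to $[L]$ that a pure-jump process cannot produce.
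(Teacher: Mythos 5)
Your proof is correct, and it in fact does more than the paper, which gives no standalone proof of this proposition: it only remarks that ``a similar proof'' to the subordinator lemma applies, and that argument really only covers the sufficiency direction (uncompensate the small jumps using $\int_{|x|\le 1}|x|\,\nu(dx)<\infty$ via the reference to Bertoin, observe that the pure-jump sum has a.s.\ absolutely summable jumps, and that the resulting process is a martingale once centered and integrable). Your sufficiency argument is exactly that one. The added value of your write-up is the necessity direction, which the paper leaves implicit: you kill the Gaussian part by comparing $[L]_t=\sigma^2 t+\sum_{s\le t}(\Delta L_s)^2$ with the quadratic variation of a finite-variation process (equivalently, by uniqueness of the continuous local-martingale part, since a finite-variation local martingale is purely discontinuous); you read off $\int_{|x|\le1}|x|\,\nu(dx)<\infty$ from the a.s.\ absolute summability of the small jumps of a L\'evy process, and $\int_{|x|>1}|x|\,\nu(dx)<\infty$ from the integrability of $L_t$. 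All of these steps are standard and correct, and together they give a complete two-sided proof where the paper only gestures.

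The one point worth flagging is a convention issue that the statement itself glosses over: ``no drift'' and ``L\'evy triple $(0,0,\nu)$'' must be read relative to the \emph{finite-variation} representation $L_t=ct+\sum_{s\le t}\Delta L_s$ (available precisely when $\int_{|x|\le1}|x|\,\nu(dx)<\infty$), not the truncated convention, since in the latter the first component of the triple of $\sum_{s\le t}\Delta L_s$ equals $\int_{|x|\le1}x\,\nu(dx)$, which need not vanish. You resolve this the right way by identifying ``no drift'' with the vanishing of the linear term $c$ and by noting that centeredness then amounts to $\int_{\mathbb{R}}x\,\nu(dx)=0$; it would be worth making that choice of convention explicit in one sentence so that both implications of the equivalence are stated in the same normalization. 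With that caveat, the argument is complete.
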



We conclude this section with an example of PWC martingale which does not belong to the family of L\'evy processes but has the interesting feature to evolve in a time-dependent range. 
\begin{lemma}
Let $W^1, W^2$ be two independent Brownian motions. For $i=1,2$ set 
$$g^0_t(W^i) := \sup\{s\leq t,\; W^i_s=0\}.$$
Then, $Z:=g^0(W^1) - g^0(W^2)$ is a 1-self-similar PWC martingale which evolves in the cone $\{[-t, t], t\geq0\}$. Its Laplace transform admits the expansion :
$$\varphi_{Z_t}(iu)=\E\left[e^{-u Z_t}\right] = \sum_{k=0}^{+\infty}  \frac{(2k)!}{(k!)^4}\left(\frac{u t}{4}\right)^{2k}$$
and its cumulative distribution function (for $t>0$) is given, for $-t\leq z\leq t$, by :
$$F_{Z_t}(z)=  \frac{1}{2}+\emph{sign}(z) \frac{2}{\pi^2} \int_0^{\frac{\pi}{2}}\ln\left(\tan(x)\frac{|z|}{t} + \sqrt{1+\tan^2(x)\frac{z^2}{t^2}}\right)  \frac{dx}{\cos(x)}\;. $$
\end{lemma}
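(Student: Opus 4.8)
The plan is to establish three separate claims about $Z := g^0(W^1) - g^0(W^2)$: that it is a PWC martingale, that it is $1$-self-similar and lives in the cone $[-t,t]$, and then to compute its Laplace transform and distribution function explicitly.

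First I would deal with the structural properties, which are nearly immediate. Each $g^0_t(W^i)$ is the last zero of a Brownian motion before time $t$, so it is a non-decreasing, pure jump process (it stays flat while $W^i$ is away from $0$ and jumps up to the current time $t$ whenever $W^i$ returns to $0$), and clearly $0 \leq g^0_t(W^i) \leq t$. Hence $Z_t \in [-t,t]$ and $Z$ has piecewise constant paths with summable jumps. The $1$-self-similarity follows from the Brownian scaling $g^0_{ct}(W^i) \overset{d}{=} c\, g^0_t(W^i)$ applied jointly, using independence of $W^1,W^2$. For the martingale property I would rely on the known law of the single Brownian last-zero: by the arcsine law $g^0_t(W^i)/t$ is $\mathrm{Beta}(1/2,1/2)$-distributed, so $\E[g^0_t(W^i)] = t/2$ and $\E[Z_t] = 0$. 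To upgrade this to the full martingale property I would either verify that $g^0_t(W^i) - t/2$ is itself a martingale in the Brownian filtration (this is a classical fact, computable from the conditional arcsine law $\E[g^0_t \mid \cF_s]$ via the reflection-type decomposition of the post-$s$ path) and then take the difference of the two independent copies, or alternatively invoke Proposition~\ref{prop:levy}-style reasoning adapted to this non-L\'evy but self-similar setting; I would favour the direct martingale verification for $g^0(W^i) - t/2$.

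Next I would compute the Laplace transform. The density of $g^0_t(W^i)/t$ is the arcsine density $\frac{1}{\pi\sqrt{s(1-s)}}$ on $(0,1)$, so by independence
$$
\E\!\left[e^{-u Z_t}\right] = \E\!\left[e^{-u g^0_t(W^1)}\right]\,\E\!\left[e^{u g^0_t(W^2)}\right]
= \left(\frac{1}{\pi}\int_0^1 \frac{e^{-uts}}{\sqrt{s(1-s)}}\,ds\right)\left(\frac{1}{\pi}\int_0^1 \frac{e^{uts}}{\sqrt{s(1-s)}}\,ds\right).
$$
Each single Laplace integral is a confluent hypergeometric (Kummer) function of the arcsine law, and the product of $_1F_1(1/2;1;-ut)$ with $_1F_1(1/2;1;ut)$ should collapse, after multiplying the two power series and using Legendre duplication on the binomial coefficients, to the stated series $\sum_{k\geq 0} \frac{(2k)!}{(k!)^4}(ut/4)^{2k}$. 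The odd powers must cancel by the sign symmetry $Z \overset{d}{=} -Z$, which is a useful consistency check I would exploit rather than verifying the odd terms vanish by brute force.

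Finally, for the cumulative distribution function I would compute the law of the difference of two independent scaled arcsine variables. Writing $Z_t/t = U - V$ with $U,V$ i.i.d.\ arcsine on $(0,1)$, the density of $Z_t/t$ is the convolution $\int \frac{1}{\pi\sqrt{u(1-u)}}\cdot\frac{1}{\pi\sqrt{(u-w)(1-u+w)}}\,du$ over the admissible range, and integrating once in $w$ yields $F$. The substitution aligning the two square-root singularities — most naturally a tangent substitution matching the $\tan(x)/\cos(x)$ structure in the stated answer — is what produces the logarithmic integrand. I expect this convolution-and-integration step to be the main obstacle: the overlapping square-root singularities of the two arcsine densities make the inner integral delicate, and getting the integration limits and the $\mathrm{sign}(z)$ factor exactly right (together with verifying the symmetry $F_{Z_t}(z) + F_{Z_t}(-z) = 1$ and the normalization $F_{Z_t}(t) = 1$) is where the real work lies. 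Throughout I would use the $1$-self-similarity to reduce to $t=1$, computing everything for $Z_1 = U - V$ and then rescaling.
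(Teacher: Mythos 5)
Your route is essentially the paper's: the martingale property via the classical fact that $g^0_t(W^i)-t/2$ is a martingale (the paper cites Protter, Thm.~87), self-similarity via Brownian scaling, the Laplace transform as a product of the two arcsine transforms, and the law of $Z_1$ as a convolution of two arcsine densities followed by an integration in $z$. Two remarks. First, the one step your sketch genuinely papers over is the piecewise-constant property: since the zero set of $W^i$ is a perfect set, $g^0(W^i)$ does \emph{not} ``jump up to $t$ whenever $W^i$ returns to $0$'' --- the jumps occur only at the right endpoints of excursion intervals (zeros that are left-accumulation points of other zeros are continuity points of $g^0$), and the identity $g^0_t=\sum_{s\leq t}\Delta g^0_s$ requires knowing that the zero set has Lebesgue measure zero. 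The paper closes this cleanly via the Az\'ema martingale, writing $g_t^0(W)=\frac{2}{\pi}[M,M]_t=\sum_{s\leq t}(\Delta M_s)^2$, which exhibits $g^0$ directly as the sum of its jumps; you should either use that identity or invoke the null-measure argument explicitly. Second, the two computations you defer do close as you predict: each arcsine transform equals $e^{\mp ut/2}I_0(ut/2)$, so the product is $I_0^2(ut/2)$, and the Cauchy product together with Vandermonde's identity $\sum_{i}\binom{k}{i}^2=\binom{2k}{k}$ (rather than Legendre duplication) gives the stated series, the odd terms vanishing automatically; for the law, the arcsine self-convolution is a tabulated complete elliptic integral (Gradshteyn--Ryzhik 3.147(5)), which after symmetrization and scaling gives $f_{Z_t}(z)=\frac{2}{\pi^2}\int_0^{\pi/2}\bigl(t^2\cos^2x+z^2\sin^2x\bigr)^{-1/2}dx$ on $0<|z|\leq t$, and integrating this in $z$ produces exactly the logarithmic integrand and the $\mathrm{sign}(z)$ factor in the statement. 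So the plan is sound and matches the paper; only the PWC step needs an actual argument rather than a picture, and the elliptic-integral identity is the lemma you would need to import to finish the distributional formula.
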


\begin{proof}
By Protter \cite[Theorem 87]{Prott05}, the processes $\left( g^0_t(W^i) - \frac{t}{2},\, t\geq0\right)$ are  martingales, hence so is $Z$. Denoting by $M$ the Az\'ema martingale (\ref{eq:azema}), the PWC property follows from the fact that the event $\{g_s^0(W)\neq g_{s^-}^0(W)\}$ implies $\{W_s=0\cap g_s^0(W)=s\}$ hence
\begin{align*}
 g_t^0(W) = \frac{2}{\pi} [M, M]_t &= \frac{2}{\pi}\sum_{s\leq t} (\Delta M_s)^2=\sum_{s\leq t} \left( - \text{sign}(W_{s^-})\sqrt{s - g_{s^-}^0(W)}\right)^2\\&=\sum_{s\leq t} g_s^0(W)-g_{s^-}^0(W)\;.
\end{align*}
Next, the self-similarity of $Z$ comes from that of $g^0(W)$, which further implies that for $t\geq0$ :
$$Z_t \sim t \,(g_1^0(W^1) - g_1^0(W^2))\quad \in [-t,t ]\;.$$
Finally, since $g_1^0(W)$ follows the Arcsine law, we deduce on the one hand, using a Cauchy product, that :  
\begin{align*}
\E\left[e^{-u Z_t}\right]  = I_0^2\left(\frac{u t}{2}\right) &=  \sum_{k=0}^{+\infty}  \left(\frac{u t}{4}\right)^{2k}  \sum_{i=0}^k \frac{1}{(i! (k-i)!)^2}\\
&=\sum_{k=0}^{+\infty}  \left(\frac{u t}{4}\right)^{2k}   \frac{1}{(k!)^2} \sum_{i=0}^k \binom{k}{i}^2=\sum_{k=0}^{+\infty}  \left(\frac{u t}{4}\right)^{2k}   \frac{1}{(k!)^2}  \binom{2k}{k}\;.
\end{align*}
On the other hand, the density of $Z_1$ is given by the convolution, for $z\in [0,1]$ :
$$
f_{Z_t}(z)= \frac{1}{\pi^2} \int_0^{1-z} \frac{1}{\sqrt{x(1-x)}} \frac{1}{\sqrt{(z+x)(1-z-x)}}  dx= \frac{2}{\pi^2} F\left(\frac{\pi}{2}, \sqrt{1-z^2}\right)\;,
$$
where $F$ denotes the incomplete elliptic integral of the first kind, see \cite[p.275, Formula 3.147(5)]{GraRyz}. This yields, by symmetry and scaling :
$$f_{Z_t}(z)= \frac{2}{\pi^2} \int_0^{\frac{\pi}{2}} \frac{dx}{\sqrt{t^2 \cos^2(x) +  z^2 \sin^2(x)}}\; 1_{\{0<|z|\leq t\}} $$
and the resulting cumulative distribution function is obtained upon integration in $z$.
\end{proof}

Both the recursive and the vanishing compensators approaches are rather restrictive in terms of attainable range and analytical tractability. 
In the next section, we provide a more general method that can be used to build PWC martingales to any connected set of $\mathbb{R}$ (compatible with the martingale property, i.e. non-decreasing w.r.t. time) in a simple and tractable way.

\subsection{PWC martingales using time-changed techniques}

In this section, we construct a PWC martingale $Z$ by time-changing a latent ($\Pr,\FF$)-martingale $\tilde{Z}=(\tilde{Z}_t)_{t\in [0,T]}$ with the help of a suitable \textit{time-change process} $\theta$. 

\begin{definition}

A $\FF$ time-change process $\theta=(\theta_t)_{\in [0,T]}$ is a stochastic process satisfying
\begin{itemize}
\item $\theta_0=0$
\item for any $t\in [0,T]$, $\theta_t$ is $\mathcal{F}_t$-measurable (i.e. $\theta$ is adapted to the filtration $\FF$)
\item the map $t\mapsto \theta_t$ is c\`adl\`ag a.s. non-decreasing
\end{itemize}
\end{definition}

Under mild conditions stated below, $Z:=(\tilde{Z}_{\theta_t})_{t\geq 0}$ is proven to be a martingale on $[0,T]$ with respect to its own filtration, with the desired piecewise constant behavior. 
Most results regarding time-changed martingales deal with continuous martingales time-changed with a continuous process~\cite{jyc:3m,RevYor99}. This does not provide a satisfactory solution to our problem as the resulting martingale will obviously have continuous sample paths. On the other hand, it is obvious that not all time-changed martingales remain martingales, so that conditions are required on $Z$ and/or on $\theta$.
\begin{remark}
Every $\FF$-semi-martingale time-changed with a $\FF$-adapted process remains a semi-martingale but not necessarily a martingale. For instance, setting $\tilde{Z}=W$ and $\theta_t=\inf\{s:W_s>t\}$ then $\tilde{Z}_{\theta_t}=t$. Also, even if $\theta$ is independent from $\tilde{Z}$, $Z$ may fail to be a martingale in the above filtration because of integrability issues. For example if $\tilde{Z}=W$ and $\theta$ is an independent $\alpha$-stable subordinator with $\alpha=1/2$ then the time-changed process $Z$ is not integrable: $\E[|\tilde{Z}_{\theta_t}|~|\theta_t]=\sqrt{\frac{2}{\pi}}\sqrt{\theta_t}$ and $\E[\sqrt{\theta_t}]$ is undefined. 
\end{remark}

A sufficient condition to ensure that the time-changed martingale remains a martingale is to constraint $\tilde{Z}$ to be positive independent from $\theta$. Taking as $\theta$ a time-change process independent from $\tilde{Z}>0$, this result allows one to construct piecewise constant martingales having the same range as $\tilde{Z}$.
This is shown in the next lemma~\cite[Lemma 15.2 ]{Cont04}
\footnote{This result is derived in a chapter considering continuous time change processes (``this rules out subordinators''). However the authors do not rely on this assumption in the proof. Moreover, they use as a counter-example $\tilde{Z}=W$ a Brownian motion and $\theta$ a subordinator, suggesting that subordinators fit in the scope of stochastic clock processes considered in this lemma.}

\begin{lemma} Let $\tilde{Z}$ be a positive martingale (in its own filtration) and $\theta$ be an independent time-change process. Then, the time-changed process $Z$ is again a martingale in the filtration generated by the time-changed process $\tilde{Z}$ and the stochastic clock $\theta$.
\end{lemma}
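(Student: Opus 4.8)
The plan is to verify the two defining properties of a martingale for $Z_t = \tilde{Z}_{\theta_t}$ in the joint filtration $\filG_t := \sigma(\tilde{Z}_{\theta_s}, \theta_s, s\leq t)$: integrability, $\E[|Z_t|]<\infty$, and the martingale identity $\E[Z_t \mid \filG_s] = Z_s$ for $s\leq t$. The natural device throughout is conditioning on the \emph{entire} clock path $\theta$ (or at least on $\theta_t$), which is legitimate precisely because $\theta$ is independent of $\tilde{Z}$. The positivity of $\tilde{Z}$ is what rescues integrability: for a fixed realization of the clock one would like to write $\E[\tilde{Z}_{\theta_t}] = \E[\tilde{Z}_0] = \E[Z_0]$, but $\theta_t$ is a random (albeit deterministic-given-$\theta$) time, so one cannot simply invoke the martingale property of $\tilde{Z}$ at a fixed deterministic time. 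First I would therefore handle integrability by an optional-stopping/monotone-limit argument: approximate $\theta_t$ by simple clocks taking finitely many values, use that a positive martingale is a positive supermartingale so that $\E[\tilde{Z}_{\theta_t} \mid \theta] \leq \tilde{Z}_0$, and conclude $\E[|Z_t|] = \E[Z_t] \leq \E[\tilde{Z}_0] < \infty$ by positivity.

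Next I would establish the martingale identity. The clean way is to first condition on the whole clock trajectory: working on the product space, fix $\theta$ and treat $\theta_s\leq \theta_t$ as two deterministic times. Here the key point is that $\theta$ being a \emph{time-change process} is non-decreasing, so $\theta_s \leq \theta_t$ for $s\leq t$, and $\theta_s$ is measurable with respect to the information available at time $s$. I would argue that, conditionally on $\theta$, the random variable $\tilde{Z}_{\theta_s}$ is $\mathcal F^{\tilde Z}_{\theta_s}$-measurable while $\tilde{Z}_{\theta_t}$ is to be averaged, and that the positive-martingale (hence supermartingale) structure together with the approximation by simple clocks lets one upgrade the supermartingale inequality to the equality $\E[\tilde{Z}_{\theta_t}\mid \mathcal F^{\tilde Z}_{\theta_s}, \theta] = \tilde{Z}_{\theta_s}$. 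Because $\theta$ is independent of $\tilde Z$, conditioning on $\theta$ and then on the $\sigma$-field $\filG_s$ (generated by $\theta_{\cdot}$ up to $s$ and by $\tilde Z$ evaluated along the clock up to $s$) commutes appropriately, and one recovers $\E[Z_t\mid \filG_s]=Z_s$ after integrating over the law of $\theta$ via the tower property.

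The step I expect to be the main obstacle is making the conditional martingale property rigorous when the clock value $\theta_t$ is itself random. One cannot naively say $\E[\tilde Z_{\theta_t}\mid\theta]=\tilde Z_{\theta_0}$, since this is an optional-sampling statement at a possibly unbounded random time, and optional stopping for martingales at general (even bounded) times requires either boundedness of the time, uniform integrability, or the supermartingale inequality. The positivity assumption is exactly the hypothesis that circumvents uniform-integrability difficulties: a nonnegative supermartingale is always well-behaved under optional sampling in the inequality direction, and the monotone approximation $\theta^{(n)}\downarrow\theta$ by clocks with finitely many values, combined with Fatou on one side and the supermartingale bound on the other, should pin down the equality. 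I would carry out the argument in the order: (i) reduce to fixed $\theta$ by independence and the product-space tower property; (ii) approximate $\theta$ by simple clocks and invoke discrete optional stopping; (iii) pass to the limit using positivity (monotone/dominated convergence and Fatou) to obtain both integrability and the equality; (iv) un-condition over the law of $\theta$ to conclude the $\filG$-martingale property.
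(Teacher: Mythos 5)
Your high-level architecture is the right one, and it is essentially the argument behind the paper's (proofless) citation of \cite[Lemma 15.2]{Cont04}: condition on the whole clock path, which is legitimate by independence; use positivity to secure integrability; recover the martingale identity in $\filG_s$ by the tower property. Your diagnosis of the role of positivity is also correct: for a positive martingale the map $u\mapsto\E[|\tilde{Z}_u|]=\E[\tilde{Z}_0]$ is constant, so $\E[|Z_t|]=\E\bigl[\E[\tilde{Z}_{\theta_t}\mid\theta]\bigr]=\E[\tilde{Z}_0]<\infty$ by Fubini, whereas for a signed martingale $\E[|\tilde{Z}_u|]$ may grow in $u$ and its composition with an unbounded $\theta_t$ need not be integrable --- exactly the paper's counterexample with $W$ and a $1/2$-stable subordinator.

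The gap is in the middle of your plan: you have misidentified where the difficulty lies, and the device you propose to overcome it does not deliver what you need. Once you condition on the full trajectory of $\theta$ (your step (i)), the values $\theta_s\le\theta_t$ are \emph{deterministic numbers}, and by independence the conditional law of $\tilde{Z}$ is its unconditional law; the identity $\E[\tilde{Z}_{\theta_t}\mid\cF^{\tilde{Z}}_{\theta_s},\theta]=\tilde{Z}_{\theta_s}$ is then the plain martingale property of $\tilde{Z}$ at two fixed times. There is no optional-sampling statement anywhere: $\theta_t$ is never a stopping time for $\tilde{Z}$ in this problem, and $\E[\tilde{Z}_{\theta_t}\mid\theta]=\E[\tilde{Z}_u]\big|_{u=\theta_t}$ is just Fubini, not optional stopping. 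Consequently steps (ii)--(iii) are unnecessary; worse, as described they are invalid. ``Fatou on one side and the supermartingale bound on the other'' cannot pin down an equality, because both tools give inequalities in the \emph{same} direction ($\le$); and the missing reverse inequality is genuinely false for positive non-uniformly-integrable martingales sampled at general random times --- take $\tilde{Z}=\cE(W)$ and $\tau=\inf\{t:\tilde{Z}_t\le 1/2\}$, so that $\E[\tilde{Z}_\tau]=1/2<1=\E[\tilde{Z}_0]$. What rescues equality in the lemma is independence, not positivity. If you insist on approximating $\theta^{(n)}_t\downarrow\theta_t$ by simple clocks, passing to the limit requires $L^1$-convergence of $\tilde{Z}_{\theta^{(n)}_t}$, which for nonnegative variables follows from Scheff\'e's lemma only once you already know $\E[\tilde{Z}_{\theta_t}]=\E[\tilde{Z}_0]$, i.e. after the direct Fubini computation that makes the approximation redundant. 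The fix is to delete (ii)--(iii) and execute (i) properly: check $\E[Z_tG]=\E[Z_sG]$ for $G$ a bounded functional of $(\theta_{s_i},\tilde{Z}_{\theta_{s_i}})_{s_i\le s}$ by integrating the fixed-clock martingale identity over the law of $\theta$, then conclude by the tower property since $Z_s$ is $\filG_s$-measurable.
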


%
%

As suggested in~\cite{Cont04}, one possibility to relax the positivity constraint on $\tilde{Z}$ is to impose an integrability condition on $\tilde{Z}$ only. For instance, uniform integrability of $\tilde{Z}$ is enough in that respect.

\begin{lemma} Let $\tilde{Z}$ be a uniformly integrable martingale relative to its natural filtration. Then $Z_\cdot:=\tilde{Z}_{\theta_\cdot}$ is a martingale in the filtration generated by the time-changed process $\tilde{Z}$ and the stochastic clock $\theta$.
\end{lemma}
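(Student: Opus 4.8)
The plan is to place ourselves in the setting of the preceding lemma, where $\theta$ is independent of $\tilde{Z}$, and to exploit this independence by conditioning on the \emph{whole} trajectory of the clock; this reduces the martingale property of $Z$ to that of $\tilde{Z}$ evaluated at deterministic times, uniform integrability playing exactly the role that positivity played before. Write $(\tilde{\mathcal{F}}_t)$ for the natural filtration of $\tilde{Z}$, let $\mathcal{H}:=\sigma(\theta_u,\, u\geq 0)$ be the $\sigma$-algebra carrying the full clock, and let $(\mathcal{G}_t)$ denote the filtration generated by $Z=\tilde{Z}_\theta$ and $\theta$, so that $Z_t$ is $\mathcal{G}_t$-measurable by construction. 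Since $\tilde{Z}$ is uniformly integrable it is closed by a terminal variable $\tilde{Z}_\infty\in L^1$ with $\tilde{Z}_r=\E[\tilde{Z}_\infty\mid\tilde{\mathcal{F}}_r]$ for every $r$; this closure is the structural input that replaces the positivity assumption.

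First I would settle integrability. Conditioning on $\mathcal{H}$ and using independence, $\E[\,|\tilde{Z}_{\theta_t}|\mid\mathcal{H}]=\psi(\theta_t)$ where $\psi(r):=\E[\,|\tilde{Z}_r|\,]$. Conditional Jensen applied to the closure gives $|\tilde{Z}_r|\leq\E[\,|\tilde{Z}_\infty|\mid\tilde{\mathcal{F}}_r]$, hence $\psi(r)\leq\E[\,|\tilde{Z}_\infty|\,]<+\infty$ uniformly in $r$, and therefore $\E[\,|Z_t|\,]\leq\E[\,|\tilde{Z}_\infty|\,]<+\infty$.

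Next comes the martingale property, which is the crux. Fix $s\leq t$. I would establish the stronger identity $\E[Z_t\mid\mathcal{G}_s\vee\mathcal{H}]=Z_s$ and then project down. Freezing the clock at a deterministic c\`adl\`ag non-decreasing path $\eta$ (legitimate by the independence/``freezing'' principle), the conditioning $\sigma$-algebra $\mathcal{G}_s\vee\mathcal{H}$ reduces to $\sigma(\tilde{Z}_{\eta_u},\, u\leq s)$, which is contained in $\tilde{\mathcal{F}}_{\eta_s}$ because $\eta$ is non-decreasing; since $\eta_s\leq\eta_t$ are now deterministic times, the ordinary martingale property of $\tilde{Z}$ yields $\E[\tilde{Z}_{\eta_t}\mid\tilde{\mathcal{F}}_{\eta_s}]=\tilde{Z}_{\eta_s}$, and projecting onto the smaller $\sigma$-algebra (which already contains $\tilde{Z}_{\eta_s}$) leaves $\tilde{Z}_{\eta_s}$ unchanged. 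Integrating the frozen path back against the law of $\theta$ gives $\E[Z_t\mid\mathcal{G}_s\vee\mathcal{H}]=\tilde{Z}_{\theta_s}=Z_s$. Finally, since $\mathcal{G}_s\subseteq\mathcal{G}_s\vee\mathcal{H}$ and $Z_s$ is $\mathcal{G}_s$-measurable, the tower property closes the argument:
\[
\E[Z_t\mid\mathcal{G}_s]=\E\big[\E[Z_t\mid\mathcal{G}_s\vee\mathcal{H}]\mid\mathcal{G}_s\big]=\E[Z_s\mid\mathcal{G}_s]=Z_s.
\]

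I expect the main obstacle to be the rigorous justification of the freezing step: one must argue that conditionally on $\{\theta=\eta\}$ the observed information $\mathcal{G}_s\vee\mathcal{H}$ collapses \emph{exactly} to $\tilde{Z}$ sampled on $[0,\eta_s]$, and that the conditional expectation of $\tilde{Z}_{\theta_t}$ computed with a frozen clock reassembles into the genuine conditional expectation given $\mathcal{G}_s\vee\mathcal{H}$. This is precisely where independence is indispensable, since it guarantees that conditioning on the clock does not perturb the law, and hence the martingale property, of $\tilde{Z}$; everything else---the integrability bound and the final tower-property collapse---is routine.
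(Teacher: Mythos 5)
Your proof is correct and takes essentially the same route as the paper: the integrability bound $\E[|Z_t|]\leq \E[|\tilde{Z}_\infty|]$ obtained from the closure/submartingale property of $|\tilde{Z}|$ under uniform integrability is exactly the paper's argument, and your freezing-the-independent-clock computation of $\E[Z_t\mid\mathcal{G}_s]$ is precisely the step the paper defers with ``the conditional expectation discussion is the same as above.'' You simply make explicit (including the implicit independence of $\theta$ and $\tilde{Z}$) what the paper leaves to the preceding lemma.
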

\begin{proof} It is enough to discuss the integrability of $Z$ (the conditional expectation discussion is the same as above). The martingale property of $\tilde{Z}$ forces $|\tilde{Z}|$ to be a submartingale: $\E[|\tilde{Z}_t|]\leq\E[|\tilde{Z}_\infty|]$ where the right-hand side is bounded by some constant $M$ from uniform integrability. Hence,
$$\E[|Z_t|]=\E[|\tilde{Z}_{\theta_t}|]\leq \E[|\tilde{Z}_{\infty}|]\leq M\;.$$
\end{proof}

Note that the requirement that $\tilde{Z}$ is integrable on $[0,\infty)$ is needed in the case where $\theta$ is unbounded. One can weaken the condition on $\tilde{Z}$ by moving the integrability requirement on the time-changed process $\theta$ as shown in the below lemma.

\begin{lemma}\label{lemma:TCmartBound} Let $\theta$ be bounded on $[0,T]$ (i.e. there exists an increasing function $k$ such that $\theta_t\leq k(t)$ for all $t$ and thus $\theta_t\leq k(T)$) and $\tilde{Z}$ be a martingale (in its own filtration) on $[0,k(T)]$, independent from $\theta$. Then, $Z$ is a martingale on $[0,\theta_T]$ in the filtration generated by the time-changed process $\tilde{Z}$ and the stochastic clock $\theta$.

\end{lemma}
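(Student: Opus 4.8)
The plan is to follow the same two-step pattern used in the previous two lemmas: first establish that $Z_t=\tilde{Z}_{\theta_t}$ is integrable for each $t\in[0,T]$, and then verify the conditional-expectation (martingale) property, which --- as already observed after the uniformly integrable lemma --- is essentially unchanged and hinges only on the independence of $\tilde{Z}$ and $\theta$. The one genuinely new point compared with the uniformly integrable case is that here the boundedness $\theta_t\leq k(T)$ lets me replace the global integrability hypothesis on $\tilde{Z}$ by the weaker requirement that $\tilde{Z}$ be a martingale merely on the \emph{finite} horizon $[0,k(T)]$.

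First I would settle integrability. A martingale on the finite interval $[0,k(T)]$ has an integrable terminal value $\tilde{Z}_{k(T)}$, and $|\tilde{Z}|$ is a submartingale there, so that $\psi(s):=\E[|\tilde{Z}_s|]\leq \E[|\tilde{Z}_{k(T)}|]=:M<+\infty$ for every $s\in[0,k(T)]$. Since $\theta_t\leq k(T)$ almost surely and $\theta$ is independent of $\tilde{Z}$, conditioning on the value of $\theta_t$ and integrating against its law yields
\[
\E[|Z_t|]=\E\big[|\tilde{Z}_{\theta_t}|\big]=\E\big[\psi(\theta_t)\big]\leq M<+\infty,
\]
which is the exact analogue of the bound from the previous lemma, but now obtained under a finite-horizon assumption instead of uniform integrability on $[0,\infty)$.

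For the martingale property I would condition on the whole path of $\theta$. Because $\theta$ is independent of $\tilde{Z}$, conditionally on $\{\theta=\vartheta\}$ the process $\tilde{Z}$ retains its law and stays a martingale, while $\vartheta$ is a fixed non-decreasing c\`adl\`ag function with $\vartheta_t\leq k(T)$. Hence for $s_1\leq\cdots\leq s_n\leq u\leq t$ the sampled values $\tilde{Z}_{\vartheta_{s_1}},\ldots,\tilde{Z}_{\vartheta_{s_n}},\tilde{Z}_{\vartheta_u},\tilde{Z}_{\vartheta_t}$ form a discrete martingale indexed by deterministic times, so $\E[\tilde{Z}_{\vartheta_t}\mid\mathcal{F}_{\vartheta_u}]=\tilde{Z}_{\vartheta_u}$ with each $\tilde{Z}_{\vartheta_{s_i}}$ being $\mathcal{F}_{\vartheta_u}$-measurable, $(\mathcal{F}_s)$ denoting the own filtration of $\tilde{Z}$. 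A monotone-class argument over test functionals of the form $g(\theta_{s_1},\ldots,\theta_{s_n})\,h(\tilde{Z}_{\theta_{s_1}},\ldots,\tilde{Z}_{\theta_{s_n}})$, which generate $\mathcal{G}_u:=\sigma(\tilde{Z}_{\theta_s},\theta_s:s\leq u)$, then upgrades this to $\E[Z_t\mid\mathcal{G}_u]=Z_u$ after integrating back over the law of $\theta$. Equivalently, and perhaps more cleanly, I would enlarge the filtration by the independent $\sigma$-field $\mathcal{A}_T$ generated by the full path of $\theta$: in this initial enlargement $\tilde{Z}$ is still a martingale, $\theta_t$ is a \emph{bounded} stopping time (since $\{\theta_t\leq s\}\in\mathcal{A}_T$ and $\theta_t\leq k(T)$), and the optional sampling theorem gives $\E[\tilde{Z}_{\theta_t}\mid\mathcal{F}_{\theta_u}\vee\mathcal{A}_T]=\tilde{Z}_{\theta_u}$.

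The hard part will be purely technical rather than conceptual: one must keep careful track of the filtrations, in particular verify that $\mathcal{G}_u$ is contained in the stopped enlarged $\sigma$-field $\mathcal{F}_{\theta_u}\vee\mathcal{A}_T$ and that $Z_u$ is $\mathcal{G}_u$-measurable, so that the identity proved in the enlarged filtration descends to $\mathcal{G}$ via the tower property. The finiteness $\theta_t\leq k(T)$ is exactly what makes this work, as it simultaneously guarantees that $\theta_t$ is a \emph{bounded} stopping time (so optional sampling applies without any further uniform-integrability input) and that every sampled time remains inside the horizon $[0,k(T)]$ on which $\tilde{Z}$ is assumed to be a martingale.
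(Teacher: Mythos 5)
Your proposal is correct and follows essentially the same route as the paper: the integrability step (bounding $\E[|\tilde{Z}_{\theta_t}|]$ by $\E[|\tilde{Z}_{k(T)}|]$ via the submartingale property of $|\tilde{Z}|$ and the bound $\theta_t\leq k(T)$) is exactly the paper's argument, and the paper explicitly defers the conditional-expectation step to the preceding lemmas, treating it as unchanged. The only difference is that you spell out that deferred step (conditioning on the path of $\theta$, or equivalently initial enlargement by $\sigma(\theta)$ plus optional sampling at the bounded stopping times $\theta_u\leq\theta_t$), which the paper leaves implicit by citing the Cont--Tankov lemma.
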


\begin{proof}
As $\tilde{Z}$ is integrable on $[0,k(T)]$ there exists an increasing function $f$ such that : $\E[ |\tilde{Z}_t| ] \leq f(t) < \infty$ for all $t\in[0,k(T)]$. Hence, for all $t\in[0,T]$:
$$\E[|\tilde{Z}_{\theta_t}|]\leq \E[|\tilde{Z}_{k(T)}|]\leq f(k(T))<\infty$$
\end{proof}

From a practical point of view, time-changed processes $\theta$ that are unbounded on $[0,T]$ may cause some problems, especially when the transition densities of $\tilde{Z}$ are not explicitly known. In such cases indeed (or when $\tilde{Z}$ needs to be simulated jointly with other processes), sampling paths of $\tilde{Z}$ calls for a discretization scheme, whose error typically increases with the time step. Hence, sampling $Z$ on $[0,T]$ typically requires a fine sampling of $\tilde{Z}$ on $[0,\theta_T]$, leading to prohibitive computational times if $\theta_T$ is allowed to take very large values.
Hence, the class of time-changed processes $\theta$ that are bounded by some function $k$ on $[0,T]$ for any $T<\infty$ whilst preserving analytical tractability proves to be quite interesting. This is of course violated by most of the standard time-change processes (e.g. integrated CIR, Poisson, Gamma, or Compounded Poisson subordinators). A naive alternative consists in capping the later but this would trigger some difficulties. Using $\theta_t=N_t\wedge t$ would mean that $Z=Z_0$ on $[0,1]$ whilst if we choose $\theta_t=J_t\wedge t$ the resulting process may have linear pieces (hence not be piecewise constant). 
There exist however simple time-change processes $\theta$ satisfying $\sup_{s\in[0,t]}\theta_s\leq k(t)$ for some functions $k$ bounded on any closed interval and being piecewise constant, having stochastic jumps and having a non-zero possibility to jump in any time set of non-zero measure. Building PWC martingales using such type of processes is the purpose of next section.

\section{Lazy martingales}

We first present a stochastic time-change process that satisfies this condition in the sense that the calendar time is always ahead of the stochastic clock that is, satisfies the boundedness requirement of the above lemma with the linear boundary $k(t)=t$. We then use the later to create PWC martingales. 

\subsection{Lazy clocks}

We would like to define stochastic clocks that keep time frozen almost everywhere, can jump occasionally, but can't go ahead of the real clock. Those stochastic clocks would then exhibit the piecewise constant path and the last constraint has the nice feature that any stochastic process $Z$ adapated to $\mathbb{F}$, $Z_t\in\cF_t$ is also adapted to $\mathbb{F}$ enlarged with the filtration generated by $\theta$. In particular, we do not need to know the value of $Z$ after the real time $t$. As far as $Z$ is concerned, only the sample paths of $Z$ (in fact $\tilde{Z}$) up to $\theta_t\leq t$ matters. In the sequel, we consider a specific class of such processes, called \textit{lazy clocks} hereafter, that have the specific property that the stochastic clock typically ``sleeps'' (i.e. is ``on hold''), but  gets synchronized to the calendar time at some random times.

\begin{definition} The stochastic process $\theta:\mathbb{R}^+\to\mathbb{R}^+,~t\mapsto \theta_t$ is a $\FF$-\emph{lazy clock} if it satisfies the following properties
\begin{itemize}
\item[$i)$] it is a $\FF$-time change process: in particular, it is grounded ($\theta_0=0$), $\FF$-adapted, c\`adl\`ag and non-decreasing;
\item[$ii)$] it has piecewise constant sample paths : $\theta_t = \sum_{s\leq t} \Delta \theta_s$;
\item[$iii)$] it can jump at any time and, when it does, it synchronizes to the calendar clock.
\end{itemize}
\end{definition}

%
\begin{figure}[h!]
\centering
\subfigure[Poisson Lazy clock ($\lambda=3/2$, see Section~\ref{sec:PLC})]{\includegraphics[width=0.48\columnwidth]{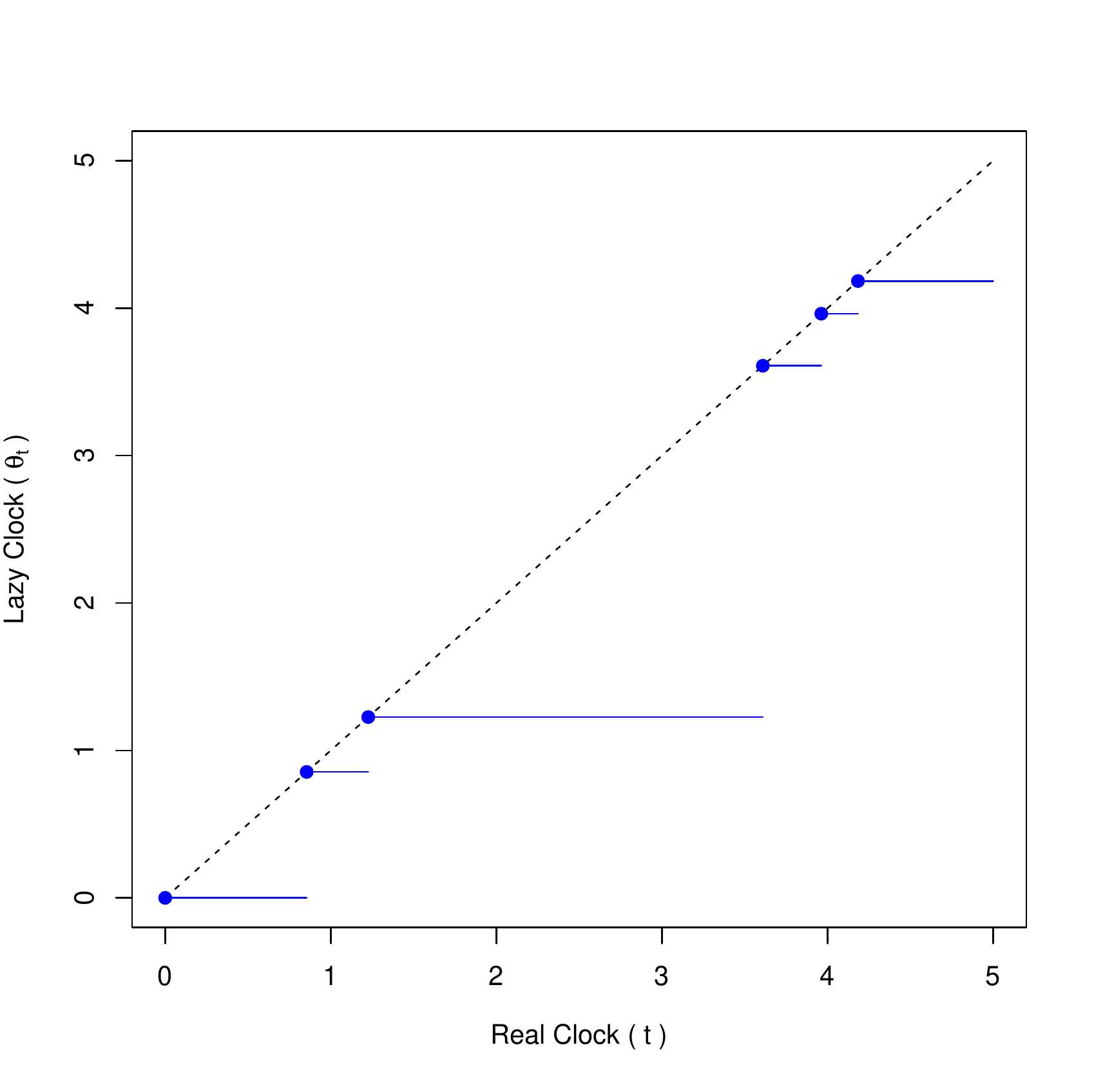}}\hspace{0.2cm}
\subfigure[Brownian Lazy clock (see Section~\ref{sec:BLC})]{\includegraphics[width=0.48\columnwidth]{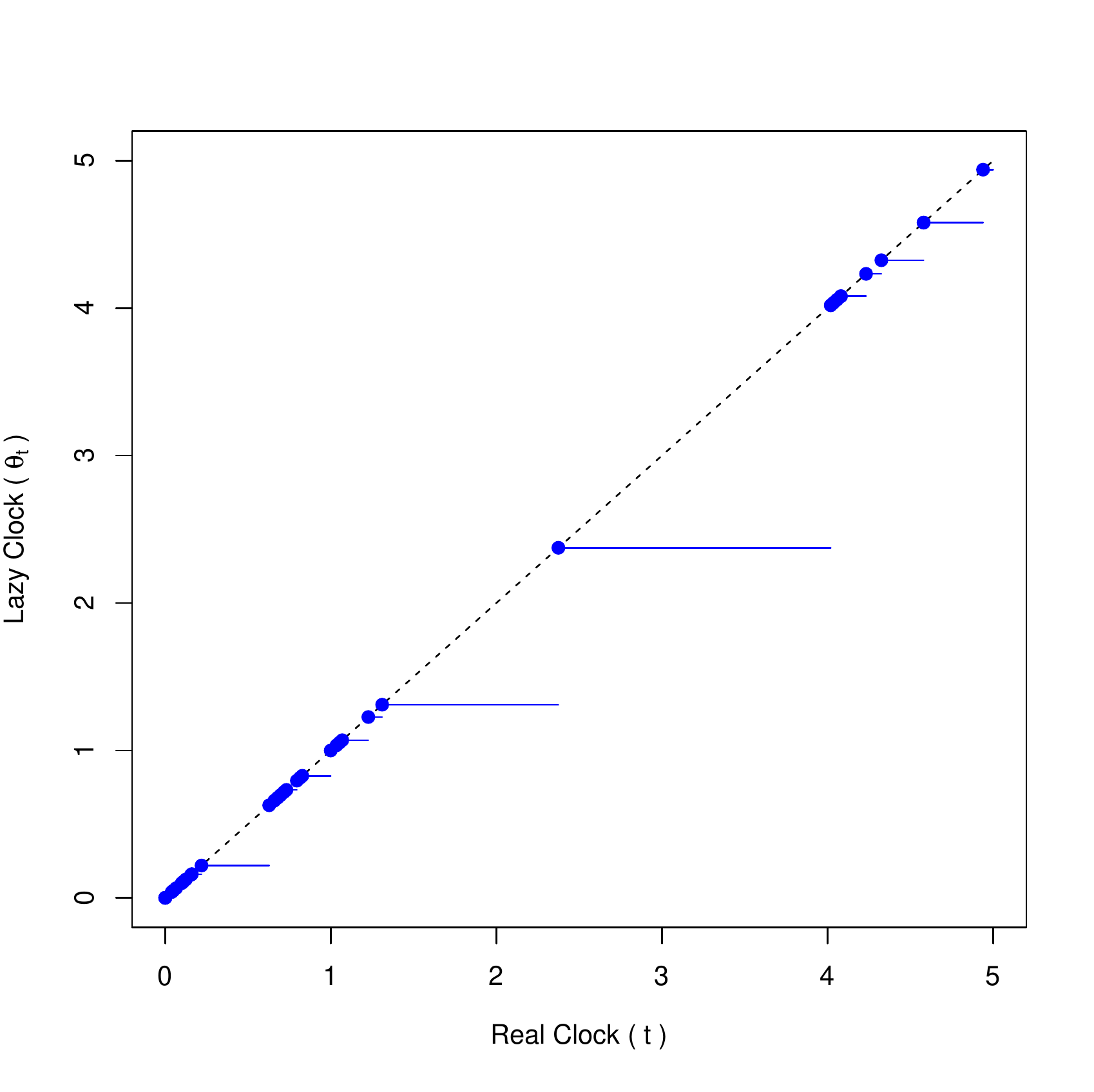}}
\caption{Sample path of Lazy clocks on $[0,5]$.}\label{fig:LC}
\end{figure}

In the sense of this definition, Poisson and Compound Poisson processes are examples of subordinators that keep time frozen almost everywhere but are not lazy clocks however as nothing constraints them to reach $t$ if they jump at $t$. Neither are their capped versions as there are some intervals during which $\theta$ cannot jump or grows linearly. \medskip

\begin{remark}
Note that for each $t>0$, the random variable $\theta_t$ is a priori not a $(\mathcal{F}_s, s\geq0)$-stopping time. In fact, defining 
$$C_t := \inf\{s~;~\theta_s>t\}$$
then $(C_t,\, t\geq0)$ is an increasing family of $\FF$-stopping times. Conversely, for every $t\geq 0$, the lazy clock $\theta$ is a family of $(\mathcal{F}_{C_s},\,s\geq0)$-stopping times, see Revuz-Yor \cite[Chapter V]{RevYor99}.  
\end{remark}

In the following, we shall show that lazy clocks are essentially linked with last passage times, as illustrated in the next proposition.
\medskip

\begin{proposition}
A process $\theta$ is a lazy clock if and only if there
exists a c\`adl\`ag  process $A$ such that the set $\mathcal{Z}:=\{s;\, A_{s^-}=0\text{ or }A_s=0\}$ has a.s. zero Lebesgue measure and $\theta_t=g_t$ with 
$$g_t:=\sup\{s\leq t;\, A_{s^-}=0\text{ or }A_s=0\}\;.$$
\end{proposition}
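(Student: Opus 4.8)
The plan is to prove both implications of this characterization separately.

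\medskip

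\noindent\textbf{The ``if'' direction.} Suppose we are given a c\`adl\`ag process $A$ such that $\mathcal{Z}:=\{s;\, A_{s^-}=0\text{ or }A_s=0\}$ has a.s. zero Lebesgue measure, and define $\theta_t := g_t = \sup\{s\leq t;\, A_{s^-}=0\text{ or }A_s=0\}$. I would verify the three defining properties of a lazy clock. Property $i)$ is largely structural: $\theta_0=0$ holds since the supremum is taken over $\{s\leq 0\}$ (with the usual convention $\sup\emptyset=0$); the map $t\mapsto g_t$ is non-decreasing because the index set over which we take the supremum grows with $t$; adaptedness holds because $g_t$ depends only on the path of $A$ on $[0,t]$; and right-continuity of $g_t$ follows from a standard argument on last-passage-type functionals of a c\`adl\`ag process. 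Property $ii)$, the piecewise-constant behaviour $\theta_t=\sum_{s\leq t}\Delta\theta_s$, is where the zero-Lebesgue-measure hypothesis on $\mathcal{Z}$ enters: between two consecutive visits of $A$ to zero, $g_t$ stays constant, and when $A$ returns to $0$ at some time $s$, $g$ jumps up to $s$ itself, so $g$ increases only by jumps. The key point to check here is that $g$ has no continuous increasing part, which would require the level set $\mathcal{Z}$ to support a continuous increase; the assumption that $\mathcal{Z}$ is Lebesgue-null is designed precisely to exclude this. Property $iii)$ is then immediate from the construction: if $g$ jumps at time $s$, it must be that $A_{s^-}=0$ or $A_s=0$ and $g$ synchronizes to $g_s=s$, i.e. it jumps exactly to the calendar time.

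\medskip

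\noindent\textbf{The ``only if'' direction.} Conversely, suppose $\theta$ is a lazy clock. The idea is to reverse-engineer a suitable process $A$ from $\theta$ itself. A natural candidate is $A_t := t - \theta_t$, the ``lag'' of the clock behind calendar time, which is c\`adl\`ag since $\theta$ is. Because $\theta$ is a time-change process bounded by $t$ (as $\theta$ stays in arrears of the real clock), we have $A_t\geq 0$. The synchronization property $iii)$ says that whenever $\theta$ jumps at a time $s$, it jumps to $s$, i.e. $\theta_s=s$, which is exactly the statement $A_s=0$. I would then argue that the last time before $t$ at which $A$ (or $A_{\cdot^-}$) hits zero recovers $\theta_t$: namely $g_t=\sup\{s\leq t; A_{s^-}=0\text{ or }A_s=0\}$ coincides with $\theta_t$. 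The two facts needed are that the instants where $A=0$ are exactly the synchronization times $\{s:\theta_s=s\}$ together with the times $\theta$ is ``caught up'', and that the piecewise-constant structure of $\theta$ forces $\theta_t$ to equal the most recent such synchronization time. Finally I would check that $\mathcal{Z}$ has zero Lebesgue measure, which should follow from property $ii)$: since $\theta_t=\sum_{s\leq t}\Delta\theta_s$ is a pure-jump (hence countably-supported) increase, the clock can be synchronized — i.e. $A_s=0$ — only on a set that carries no continuous growth, and one shows this set is Lebesgue-null.

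\medskip

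\noindent\textbf{Main obstacle.} I expect the delicate point to be the precise handling of the left-limit condition ``$A_{s^-}=0$ or $A_s=0$'' and the interplay between jumps of $A$ and jumps of $g$. A c\`adl\`ag process can touch zero either continuously or by a jump (landing on $0$, or leaving from $0$), and the definition of $g_t$ deliberately includes both $A_{s^-}=0$ and $A_s=0$ to capture all these cases so that $g$ is itself c\`adl\`ag and synchronizes correctly. Establishing rigorously that $\mathcal{Z}$ being Lebesgue-null is \emph{equivalent} to the pure-jump structure of $\theta$ (property $ii)$) — rather than merely sufficient — is the crux: one must rule out the pathological situation where $A$ spends a positive-measure set of time at zero, which would make $\theta$ grow continuously and thus fail to be piecewise constant. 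I would treat this via a clean characterization of the growth points of the non-decreasing function $g$ in terms of the zero set of $A$, most likely invoking the right-continuity of $A$ to relate the closed support of $dg$ to the closure of $\mathcal{Z}$.
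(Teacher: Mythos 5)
Your overall architecture matches the paper's: for the ``only if'' direction you take $A_t=\theta_t-t$ (up to sign) and observe that its zero set is the set of synchronization times, and for the ``if'' direction you reduce everything to verifying the pure-jump identity $g_t=\sum_{s\leq t}\Delta g_s$. The gap lies in how you propose to verify that identity. You argue that a continuous increasing part of $g$ ``would require the level set $\mathcal{Z}$ to support a continuous increase'', and that $\mathcal{Z}$ being Lebesgue-null excludes this; in your ``main obstacle'' paragraph you say you would relate the support of $dg$ to $\overline{\mathcal{Z}}$. That implication is false in general: a continuous non-decreasing function can carry all of its growth on a Lebesgue-null set (the Cantor function grows only on the Cantor set), so knowing that the continuous part of $dg$ is supported on a null set does not give that it vanishes. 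Likewise, your phrase ``between two consecutive visits of $A$ to zero'' presupposes that the zeroes of $A$ are isolated, which the hypotheses do not guarantee: $\mathcal{Z}$ may well be a perfect, uncountable null set.

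What actually closes the argument in the paper is a different device, which exploits the fact that $g$ takes values in the time axis itself. One first proves the set identity $\mathcal{Z}^c=\bigcup_{s\geq 0}\,]g_{s^-},g_s[$, a countable disjoint union since $s\mapsto g_s$ is non-decreasing, and then writes $g_t=\mathrm{Leb}([0,g_t])=\mathrm{Leb}([0,g_t]\cap\mathcal{Z})+\mathrm{Leb}([0,g_t]\cap\mathcal{Z}^c)=0+\sum_{s\leq t}(g_s-g_{s^-})$. This is where the null hypothesis on $\mathcal{Z}$ genuinely enters, and the computation is robust to accumulating zeroes of $A$. Without this identity (or an equivalent substitute) your plan does not go through; with it, the remainder of your outline, including the converse direction, is essentially the paper's proof.
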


\begin{proof}
If $\theta$ is a lazy clock, then the result is immediate by taking $A_t=\theta_t-t$ which is c\`adl\`ag, and whose set of zeroes coincides with the jumps of $\theta$, hence is countable. Conversely, fix a path $\omega$. Since $A$ is c\`adl\`ag, the set $\mathcal{Z}(\omega)=\{s; A_{s^-}(\omega)=0\text{ or }A_s(\omega)=0\}$ is closed, hence its complementary may be written as a countable union of disjoint intervals. We claim that 
\begin{equation}\label{eq:Z}
\mathcal{Z}^c(\omega) = \bigcup_{s\geq0} ]g_{s^-}(\omega), g_s(\omega)[\;.
\end{equation}
Indeed, observe first that since $s\longmapsto g_s(\omega)$ is increasing, its has a countable number of discontinuities, hence the union on the right hand side is countable. Furthermore, the intervals which are not empty are such that $A_s(\omega)=0$ or $A_{s^-}(\omega)=0$ and $g_s(\omega)=s$. In particular, if  $s_1<s_2$ are associated with non empty intervals, then 
$g_{s_1}(\omega)=s_1 \leq g_{s_2^-}(\omega)$ which proves that the intervals are disjoint.\\
Now, let $u\in \mathcal{Z}^c(\omega)$. Then $A_u(\omega)\neq 0$. Define $d_u(\omega) = \inf\{s\geq u, \,A_{s^-}(\omega)=0 \text{ or } A_s(\omega)=0\}$. By right-continuity, $d_u(\omega)>u$. We also have $A_{u^-}(\omega)\neq0$ which implies that $g_u(\omega)<u$. Therefore, $u\in ]g_u(\omega), d_u(\omega)[$ which is non empty, and this may also be written $u\in  ]g_{d_u^-(\omega)}(\omega),\, g_{d_u(\omega)}(\omega)[$ which proves the first inclusion. Conversely, it is clear that if $u\in ]g_{s^-}(\omega), g_s(\omega)[$, then $A_u(\omega)\neq0$ and $A_{u^-}(\omega)\neq0$. Otherwise, we would have $u=g_u(\omega)\leq g_{s^-}(\omega)$ which would be a contradiction. Equality (\ref{eq:Z}) is thus proved. Finally, it remains to write :
$$g_t = \int_0^{g_t} 1_{\mathcal{Z}} ds + \int_0^{g_t} 1_{\mathcal{Z}^c} ds = \sum_{s\leq t} \Delta g_s$$
since $\mathcal{Z}$ has zero Lebesgue measure.\\
\end{proof}

We give below examples of lazy clocks admitting simple closed-form distributions.

\subsubsection{Poisson Lazy clock}\label{sec:PLC}

\begin{example}
Let $(X_k, k\geq1)$ be strictly positive random variables and consider the counting process 
 $\left(N_t := \sum_{k=1}^{+\infty} 1_{\{ \sum_{i=1}^k X_i\leq t\}},\, t\geq0\right).$
Then the process $(g_t(N), t\geq0)$ defined as the last jump time of $N$ prior to $t$ or zero if $N$ did not jump by time $t$:
\beq
g_t(N):=\sup\{s\leq t; N_s\neq N_{s^-}\}= \sum_{k=1}^{+\infty} X_k 1_{\{ \sum_{i=1}^k X_i\leq t\}}
\eeq
is a lazy clock.
\end{example}

 In the case where $N$ is a Poisson process of intensity $\lambda$, i.e. when the r.v.'s  $(X_k,\, k\geq1)$ are i.i.d. with an exponential distribution of parameter $\lambda$, the law of $g_t(N)$ may easily be computed as follows.

\begin{lemma} Assume that $N$ is a Poisson process with parameter $\lambda$. Let $t\geq 0$ and $\delta(x)$ be the Dirac density centered at 0. The distribution of $g_t(N)$ is given by
\beq
f_{g_t(N)}(s)=e^{-\lambda t}(\delta(s)+\lambda e^{\lambda s})~~,~~0\leq s\leq t\label{eq:ThLazyPoisson}
\eeq
and is zero elsewhere. Hence, the cumulative distribution function is
\beq
F_{g_t(N)}(s)=1_{\{0\leq s\leq t\}}e^{-\lambda (t-s)}+1_{\{s>t\}}\label{eq:ThLazyPoissonCDF}
\eeq
and the moments are given by
\beq
\E\left[(g_t(N))^k\right]=\frac{k!}{(-\lambda)^k}(1-\e^{-\lambda t})+\sum_{i=0}^{k-1}(-1)^i\frac{t^{k-i}k!}{\lambda^i (k-i)!}~~,~~k\in \{1,2,\ldots\}
\eeq
\end{lemma}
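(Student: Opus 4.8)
My plan is to obtain the law of $g_t(N)$ by first computing its cumulative distribution function directly from the independence of Poisson increments, then differentiating to recover the density (with its atom at the origin), and finally integrating against $s^k$ to get the moments. The starting observation is that, for $0\leq s\leq t$, the event $\{g_t(N)\leq s\}$ is exactly the event that $N$ has no jump on $(s,t]$: indeed $g_t(N)$ is by definition the last jump time of $N$ before $t$ (or $0$ if none occurred), so the last jump lies in $[0,s]$ precisely when nothing happens on $(s,t]$. Since the number of jumps of $N$ on $(s,t]$ is Poisson with parameter $\lambda(t-s)$ and is independent of the past, I get immediately
$$F_{g_t(N)}(s)=\Pr(\text{no jump on }(s,t])=e^{-\lambda(t-s)},\qquad 0\leq s\leq t,$$
while $F_{g_t(N)}(s)=1$ for $s>t$ because $g_t(N)\leq t$ a.s. This reproduces (\ref{eq:ThLazyPoissonCDF}) with essentially no computation.

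For the density I would differentiate this CDF, being careful about the atom. The function $s\mapsto e^{-\lambda(t-s)}$ has a jump at $s=0$ of size $F_{g_t(N)}(0)=e^{-\lambda t}=\Pr(N_t=0)$, which accounts for the event that $N$ does not jump before $t$ and hence $g_t(N)=0$; this contributes the Dirac mass $e^{-\lambda t}\delta(s)$. On the open interval $(0,t)$ the distribution is absolutely continuous with density $\frac{d}{ds}e^{-\lambda(t-s)}=\lambda e^{-\lambda(t-s)}=e^{-\lambda t}\lambda e^{\lambda s}$. Collecting the two pieces yields $f_{g_t(N)}(s)=e^{-\lambda t}(\delta(s)+\lambda e^{\lambda s})$ on $[0,t]$, which is (\ref{eq:ThLazyPoisson}).

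The moments then follow by integrating $s^k$ against this density. For $k\geq 1$ the atom at the origin contributes $0^k=0$, so $\E[(g_t(N))^k]=\lambda e^{-\lambda t}\int_0^t s^k e^{\lambda s}\,ds$, and the whole problem reduces to evaluating $\int_0^t s^k e^{\lambda s}\,ds$. I would do this with the standard antiderivative $\int s^k e^{\lambda s}\,ds=e^{\lambda s}\sum_{j=0}^{k}(-1)^j\frac{k!}{(k-j)!\,\lambda^{j+1}}s^{k-j}$ (obtained by iterated integration by parts), evaluate it between $0$ and $t$, and simplify. Multiplying by $\lambda e^{-\lambda t}$, the boundary term at $0$ produces the factor $\frac{k!}{(-\lambda)^k}(1-e^{-\lambda t})$ after splitting off the $j=k$ term, and the remaining terms collapse into $\sum_{i=0}^{k-1}(-1)^i\frac{t^{k-i}k!}{\lambda^i(k-i)!}$, giving the stated formula.

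There is no real conceptual obstacle here: the only delicate point is bookkeeping in the last step, namely isolating the $s=0$ contribution of the antiderivative and recombining it with the $e^{-\lambda t}$ factor so that the geometric-type constant $\frac{k!}{(-\lambda)^k}(1-e^{-\lambda t})$ emerges cleanly from the two otherwise separate terms. Everything else is an immediate consequence of the Poisson independence property used to get the CDF.
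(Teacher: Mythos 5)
Your proof is correct and follows essentially the same route as the paper: both identify $\{g_t(N)\le s\}$ (for $0\le s<t$) with the event that $N$ has no jump on $(s,t]$, giving $F_{g_t(N)}(s)=e^{-\lambda(t-s)}$, with the atom $e^{-\lambda t}$ at $0$ coming from $\{N_t=0\}$. Your integration-by-parts bookkeeping for the moments (which the paper does not spell out) does indeed reproduce the stated formula.
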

\begin{proof}
This result may be proven adopting a similar strategy as in Propostion 3 of~\cite{Vrins16b}, but we shall take here a shorter route. We merely have to show that (i) $\Pr(g_t(N)=0)=e^{-\lambda t}$, (ii) $f_{g_t(N)}(s)=\lambda e^{-\lambda(t-s)}$ for all $0< s<t$ and (iii) $\Pr(g_t(N)\leq s)=1$ if $s\geq t$. The event $\{g_t(N)=0\}$ is equivalent to $\{N_t=0\}$ whose probability is $e^{-\lambda t}$, proving (i). But $g_t(N)\leq t$ $\Pr$-a.s. justifying (iii). The central point is to notice that the stochastic clock synchronizes to the real clock at each jump. When $t>s$, the event $\{g_t(N)\leq s\}$ is equivalent to say that no synchronization took place after $s$, i.e. $\{N_t=N_s\}$, whose probability is $\Pr(N_{t-s}=0)=e^{-\lambda(t-s)}$. Hence, $g_t(N)$ has a mixed distribution: it is zero for $s<0$ and $s>t$, has a probability mass of $e^{-\lambda t}$ at $s=0$, and a density part of $\lambda e^{-\lambda(t-s)}$ for $s\in (0,t]$; the proof is complete.
\end{proof}




\subsubsection{Brownian Lazy clock}\label{sec:BLC} 


Another simple example is given by the last passage time of a Brownian motion to zero\footnote{Note that the last passage time of a Brownian motion to the level zero is not clearly identified (unlike the last jump of a Poisson process for instance). As is usual, we define it as the supremum of the passage times of $W$ to that level, i.e. of the supremum of the uncountable set $\{s\leq t, W_s=0\}$, which is  inline with the mathematical definition of $g_t^0(W)$ provided in~\eqref{eq:azema}.}, i.e. $(g_t^0(W), \, t\geq0)$. The initial value of the process is $g_0^0(W)=0$ and the density of $g_t^0(W)$ is given by the \textit{L\'evy's arcsine law} (see e.g.~\cite{Prott05} p.230):
\beq
f_{g_t^0(W)}(s)=\frac{1}{\pi\sqrt{s(t-s)}}~~,~~ 0<s<t
\eeq
and zero otherwise. 
It is also possible to consider several extensions, like the last passage time of $W$ at an affine barrier, $\tilde{g}_t(W):=\sup \{s\leq t~;~ W_s= a+bs\}$. The corresponding density expressed in integral form can be found in ~\cite{Salm88} but can be further simplified with the help of the standard Normal cumulative distribution function $\Phi$ and $\Phi^\prime=\phi$, see \cite{Kaha08} :

\beq
f_{\tilde{g}_t(W)}(s)=\frac{\phi\left(\frac{a+bs}{\sqrt{s}}\right)}{\sqrt{s}}\left(\frac{2}{\sqrt{t-s}}\phi\left(b\sqrt{t-s}\right)+2b\Phi\left(b\sqrt{t-s}\right)-b\right)~~,~~ 0<s<t\;.
\eeq

Observe that $\tilde{g}_t(W)$ is not always well-defined. When $a\neq 0$ indeed, one needs to specify $\tilde{g}_t(W)$ in the cases where $W$ never reaches the barrier before $t$. 
We set, as is usual, $\tilde{g}_0(W):=0$. By doing so, $\tilde{g}$ is adapted to the natural filtration of $W$. In contrasts with $g^0_t(W)$, $\tilde{g}_t(W)$ may have a probability mass at zero, corresponding to the probability of $W$ not to reach the affine barrier prior to $t$. Suppose for instance that $a\geq 0$. Then the event $\{\tilde{g}_t(W)=0\}$ is equivalent to the event $\{W_s< a+bs;\,\forall s\in(0,t]\}$, itself equivalent to $\{\max_{s\in (0,t]}\{W_s-bs\}< a\}$. Hence, the probability mass of $\tilde{g}_t(W)$ at 0 corresponds to the probability for a Brownian motion with drift $-b$ to stay below the threshold $a$, which is known to be (see e.g.~\cite{Shrev04} Corollary 7.2.2)

\beq
\Pr(\tilde{g}_t(W)=0)=\Phi\left(\frac{a+bt}{\sqrt{t}}\right)-\e^{-2ab}\Phi\left(\frac{-a+bt}{\sqrt{t}}\right)\;.\nonumber
\eeq

Observe that this probability vanishes when $a=b=0$. Hence, one can use $g^0(W)$ or $\tilde{g}(W)$ as a lazy clock, depending on whether we want $\Pr(\theta_t=0)$ to be zero or strictly positive for $t>0$.\\
The moments of $\tilde{g}_t(W)$, $k\in \{1,2,\ldots\}$ read 
\beq
\E\left[\left(\tilde{g}_t(W)\right)^k\right]=\frac{\e^{-ab}}{\pi}\int_0^t \e^{-\frac{b^2}{2}u}\int_0^u \left(\left(k-\frac{1}{2}\right)s^{k-\frac{3}{2}}+\frac{a^2}{2}s^{k-\frac{5}{2}}\right)\e^{-\frac{a^2}{2s}} \frac{ds}{\sqrt{u-s}}du
\eeq
which, in the $a=0$ case, simplifies to
\beq
\E\left[\left(\tilde{g}_t(W)\right)^k\right]=\frac{\Gamma\left(k+\frac{1}{2}\right)}{\sqrt{\pi}(k-1)!}\int_0^t u^{k-1}\e^{-\frac{b^2}{2}u}du\;.
\eeq

\subsubsection{Bessel lazy clock}

\begin{lemma}
Let $R$ denote a Bessel process with index $\nu\in(-1,0)$ started from 0. The probability density of the lazy clock
$g_t^0(R) = \sup\{s\leq t; \, R_s=0\}$ is given, for $0< s<t$, by the generalized Arcsine law :
$$f_{g_t^0(R)}(s) = \frac{1}{\Gamma(|\nu|)\Gamma(1+\nu)} (t-s)^\nu s^{-1-\nu}.$$  
Its moments are given via the representation of Beta functions :
\beq
\E\left[\left(g_t^0(R)\right)^k\right]=\frac{\Gamma(k-\nu)}{\Gamma(k+1)\Gamma(|\nu|)}t^k~~,~~k\in \{1,2,\ldots\}
\eeq
Note that this lazy clock is $1$-self similar.
\end{lemma}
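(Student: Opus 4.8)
The plan is to reduce everything to a single application of the Markov property at an intermediate time $s$, to carry out the resulting integral by an exact substitution, and then to read off the moments and the self-similarity. Throughout I write $d:=2(1+\nu)\in(0,2)$ for the dimension of $R$ and $a:=|\nu|=-\nu\in(0,1)$. Two classical ingredients are needed. Since $R_s^2/s$ is $\chi^2_d$-distributed, the entrance law of $R$ from $0$ has density
\[
f_{R_s}(x)=\frac{x^{2\nu+1}}{2^{\nu}\,s^{\nu+1}\,\Gamma(\nu+1)}\,e^{-x^2/(2s)},\qquad x>0 .
\]
Moreover, for $d<2$ the first hitting time $T_0:=\inf\{u:R_u=0\}$ of a Bessel process started from $x>0$ satisfies $T_0\stackrel{d}{=}x^2/(2\gamma_a)$, where $\gamma_a$ is a $\mathrm{Gamma}(a,1)$ variable (Borodin--Salminen, Revuz--Yor), so that $\Pr_x(T_0>u)=\Gamma(a)^{-1}\gamma(a,x^2/(2u))$, with $\gamma(a,z)=\int_0^z w^{a-1}e^{-w}\,dw$ the lower incomplete gamma function.

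Since $\{g_t^0(R)\le s\}$ is exactly the event that $R$ avoids $0$ on $(s,t]$, the Markov property at time $s$ gives
\[
\Pr\big(g_t^0(R)\le s\big)=\int_0^\infty f_{R_s}(x)\,\Pr_x(T_0>t-s)\,dx .
\]
The decisive move is the substitution $y=x^2/(2s)$: one checks that every power of $s$ and of $2$ cancels and that the argument of the incomplete gamma becomes $\tfrac{s}{t-s}\,y$, so the whole expression depends on $s$ only through $r:=s/(t-s)$. Writing $\gamma(a,ry)=y^{a}\int_0^r v^{a-1}e^{-vy}\,dv$ and integrating out $y$ by Fubini collapses the double integral to $\frac{1}{\Gamma(a)\Gamma(1-a)}\int_0^r \frac{v^{a-1}}{1+v}\,dv$. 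Differentiating in $s$ through $r(s)$, using $r'(s)=t/(t-s)^2$ and $1+r=t/(t-s)$, all spurious factors cancel and leave $\frac{1}{\Gamma(a)\Gamma(1-a)}\,s^{a-1}(t-s)^{-a}$; substituting back $a-1=-1-\nu$, $-a=\nu$ and $\Gamma(1-a)=\Gamma(1+\nu)$ gives precisely the announced density on $0<s<t$.

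The moments are then a one-line Beta integral: $\int_0^t s^{k-1-\nu}(t-s)^{\nu}\,ds=t^kB(k-\nu,1+\nu)=t^k\,\Gamma(k-\nu)\Gamma(1+\nu)/\Gamma(k+1)$, whence $\E[(g_t^0(R))^k]=\Gamma(k-\nu)\,t^k/(\Gamma(k+1)\Gamma(|\nu|))$, the case $k=0$ confirming that the density integrates to $1$ (no atom at $0$, consistent with $0$ being regular when $d<2$). Finally, the $1$-self-similarity follows from the $\tfrac12$-self-similarity $(R_{ct})_t\stackrel{d}{=}(\sqrt{c}\,R_t)_t$: the zero set scales by $c$, hence $g_{ct}^0(R)\stackrel{d}{=}c\,g_t^0(R)$, as is also visible from the homogeneity of the density. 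I expect the only real obstacle to be bookkeeping: arranging the substitution $y=x^2/(2s)$ and the Fubini exchange so that the $s$-dependence genuinely reduces to the single variable $r$, and correctly invoking the Bessel hitting-time law, which is the sole external input.
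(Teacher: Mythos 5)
Your proof is correct and follows essentially the same route as the paper: conditioning on $R_s$ via the Markov property, combining the Bessel entrance law with the law of the first hitting time of $0$ (your incomplete-gamma form of $\Pr_x(T_0>u)$ is exactly the paper's integral representation after the substitution $w=x^2/(2u)$), reducing by Fubini to a one-dimensional integral in $r=s/(t-s)$, and differentiating; the moment and self-similarity arguments also coincide. The only difference is bookkeeping — you substitute $y=x^2/(2s)$ before integrating out the space variable, while the paper performs the Gaussian integral in $x$ directly — so no further comment is needed.
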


\begin{proof}
We have, using the Markov property and applying Fubini (see \cite{GJY}) :
\begin{align*}
\Pr(g_t^0(R)\leq s) &= \E\left[\Pr_{X_z}(T_0>t-s)\right] \\
&= \int_0^{+\infty}  \frac{2^\nu}{y^{2\nu}\Gamma(|\nu|)} \left( \int_{t-s}^{+\infty} u^{\nu-1}e^{-\frac{y^2}{2u}} du\right) \frac{y^{2\nu+1}}{2^\nu s^{\nu+1}\Gamma(\nu+1)} e^{-\frac{y^2}{2s}} dy \\
&= \frac{1}{\Gamma(|\nu|)\Gamma(\nu+1)}  \int_{t-s}^{+\infty} \frac{u^\nu s^{-\nu}}{u+s} du \\
&=\frac{1}{\Gamma(|\nu|)\Gamma(\nu+1)}\int_{\frac{t}{s}-1}^{+\infty} \frac{r^\nu}{r+1} dr
\end{align*}
after the change of variable $u=rs$. The result then follows  by differentiation. 
\end{proof}

\subsection{Time-changed martingales with lazy clocks}

In this section we consider a martingale $\tilde{Z}$ whose time is changed with an independent lazy clock to obtain a PWC martingale $Z$. We first show that (in most situations) the lazy clock is adapted to the filtration generated by $Z$. This is done by observing that the knowledge of $\theta$ amounts to the knowledge of its jump times, since the size of the jumps are always obtained as a difference with the calendar time. In particular, the properties of the lazy clock allow one to reconstruct  the trajectories of $Z$ on $[0,t]$ only from past values of $\tilde{Z}$ and $\theta$; no information about the future (measured according to the real clock) is required. We then provide the resulting distribution when the clock $g(N)$ is governed by Poisson, inhomogeneous Poisson or Cox processes.

\begin{lemma} Let $\tilde{Z}$ be a stochastic process independent from the lazy clock $\theta$ and assume that $\forall u\neq v,\; \Pr(\tilde{Z}_u =\tilde{Z}_v )=0$. Then, $\theta$ is adapted to the filtration $(\cF^Z_t, \, t\geq0)$. 
\end{lemma}
\begin{proof} Observe first that the countable union
$$\mathcal{N} = \bigcup_{s\leq t, \theta_s=s} \{ Z_{s}= Z_{s^-}  \} = \bigcup_{s\leq t, \theta_s=s} \{ \tilde{Z}_{\theta_s}= \tilde{Z}_{\theta_{s^-}}  \} $$
is of measure zero since $\tilde{Z}$ and $\theta$ are independent. This implies that a.s., the sample paths of $\theta$ (both the jump times and the jump sizes) can be recovered from the sample paths of $Z$ up to $\theta_t$, hence up to $t$. Indeed, the set of the jump times of $\theta$ on $[0,t]$ is given by $\{s\in [0,t]:\theta_{s}=s\}$. Moreover, the ``synchronization events'' $\{\theta_s=s\}$ coincide with the ``jump events'' $\{Z_{s}-Z_{{s^-}}>0\}$ so that all jump times of $\theta$ are identified by the jumps of $Z$. 
But $\theta$ is constant between two jumps and jumps to a known value (the calendar time) each time $Z$ jumps, so we have the a.s. representation $\theta_t = \sup\{s\leq t;\;Z_s\neq Z_{s^-} \}$. This means that both $\theta_t$ and $\tilde{Z}_{\theta_t}$ are revealed in $\cF^Z_{\theta_{t}}$ and, in particular, $\cF^\theta_t\subseteq\cF^Z_{\theta_{t}}$. The proof is concluded by noting that $\theta_t\leq t$, leading to $\cF^Z_{\theta_{t}}\subseteq\cF^Z_{t}$. 
\end{proof}

\begin{lemma}
Let $\tilde{Z}$ be a martingale and $N$ an independent Poisson process with intensity $\lambda$. Then $Z_t:=\tilde{Z}_{g_t(N)}$ is a PWC martingale with same range as $\tilde{Z}$. Its cumulative distribution function is given by :
\beq
F_{Z_t}(z)=\Pr(Z_t\leq z)=e^{-\lambda t}\left(1_{\{Z_0\leq z\}}+ \lambda \int_{0}^t F_{\tilde{Z}_u}( z) e^{\lambda u}du\right)
\eeq
\end{lemma}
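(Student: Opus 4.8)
The plan is to establish the two required properties separately: first that $Z$ is a PWC martingale with the same range as $\tilde{Z}$, and second the explicit form of its cumulative distribution function. For the martingale and PWC properties, I would invoke Lemma~\ref{lemma:TCmartBound} directly. The Poisson lazy clock $\theta_t=g_t(N)$ satisfies $\theta_t\leq t$ almost surely, so it is bounded by the linear function $k(t)=t$ on $[0,T]$, and it is independent from $\tilde{Z}$ by hypothesis; hence the integrability hypothesis of that lemma is met and $Z=\tilde{Z}_{g(N)}$ is a martingale. The piecewise constant behaviour follows because $g(N)$ is itself piecewise constant (it is a lazy clock, whose sample paths satisfy $\theta_t=\sum_{s\leq t}\Delta\theta_s$), so $Z_t=\tilde{Z}_{\theta_t}$ changes value only at the jump times of $\theta$; and the range statement is immediate since $Z_t$ takes values among the values of $\tilde{Z}$.

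The core of the work is the distribution formula, and here I would condition on the lazy clock. Since $\tilde{Z}$ and $N$ are independent, I would write
\[
F_{Z_t}(z)=\Pr(\tilde{Z}_{g_t(N)}\leq z)=\E\left[\Pr\left(\tilde{Z}_{g_t(N)}\leq z\;\middle|\;g_t(N)\right)\right]=\E\left[F_{\tilde{Z}_{g_t(N)}}(z)\right],
\]
where the inner conditional probability evaluates the CDF of $\tilde{Z}$ at the (independent) random time $g_t(N)$. The next step is to substitute the explicit law of $g_t(N)$ established earlier, namely the mixed distribution with a point mass $e^{-\lambda t}$ at $s=0$ and density $\lambda e^{-\lambda(t-s)}$ on $(0,t]$ given in~\eqref{eq:ThLazyPoisson}. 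Splitting the expectation over the atom and the absolutely continuous part yields
\[
F_{Z_t}(z)=e^{-\lambda t}F_{\tilde{Z}_0}(z)+\int_0^t F_{\tilde{Z}_u}(z)\,\lambda e^{-\lambda(t-u)}\,du,
\]
and since $\tilde{Z}_0=Z_0$ is (typically) deterministic, $F_{\tilde{Z}_0}(z)=1_{\{Z_0\leq z\}}$. Factoring $e^{-\lambda t}$ out of both terms gives exactly the claimed expression $e^{-\lambda t}\bigl(1_{\{Z_0\leq z\}}+\lambda\int_0^t F_{\tilde{Z}_u}(z)e^{\lambda u}\,du\bigr)$.

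The step requiring the most care is the conditioning argument: one must justify that the independence of $\tilde{Z}$ and $\theta$ legitimately lets the CDF of $\tilde{Z}$ be evaluated at the random time $g_t(N)$ before taking expectation over the law of that time. This is essentially a Fubini/tower-property argument, and it is routine once independence is invoked, but it is where the proof's rigour lives. A minor subtlety worth flagging is the treatment of the atom at zero: the value $Z_t=\tilde{Z}_0=Z_0$ on the event $\{g_t(N)=0\}$ must be handled as a genuine point contribution rather than absorbed into the integral, which is precisely why the formula carries the separate indicator term $1_{\{Z_0\leq z\}}$. I would remark that if $Z_0$ were itself random one would replace this indicator by $F_{\tilde{Z}_0}(z)$, but the stated form assumes a fixed starting point.
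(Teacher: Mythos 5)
Your proof is correct and follows essentially the same route as the paper's: the paper likewise conditions on the independent clock, writing $F_{Z_t}(z)=\int_0^\infty F_{\tilde{Z}_u}(z)\,\Pr(\theta_t\in du)$ and substituting the mixed law of $g_t(N)$ from~\eqref{eq:ThLazyPoisson}, with the atom at zero producing the indicator term. Your version merely spells out the details the paper leaves as ``obvious,'' including the appeal to Lemma~\ref{lemma:TCmartBound} for the martingale property, which is consistent with how the paper sets up that lemma.
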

\begin{proof} This result is obvious from the independence assumption between $\tilde{Z}$ and $N$ (i.e. $\theta=g(N)$),
\beq
F_{Z_t}(z)=\int_0^\infty F_{\tilde{Z}_u}(z) \Pr(\theta_t\in du)\;.\label{eq:FZGivenTheta}
\eeq
\end{proof}

A similar result applies to the inhomogeneous Poisson and Cox cases. The proofs are very similar. 

\begin{corollary} Let $N$ be an inhomogeneous Poisson processes, with (deterministic) intensity $(\lambda(u), u\in[0,T])$ and $\Lambda(t) =  \int_0^t \lambda(u)du$. Then we have : 
\beq
F_{Z_t}(z)=e^{-\Lambda(t)}\left(1_{\{Z_0\leq z\}}+ \int_{0}^t \lambda(u) F_{\tilde{Z}_u}(z) e^{\Lambda(u)}du\right)
\eeq
In the case where $N$ is an inhomogeneous Poisson process with stochastic intensity (i.e. Cox process) independent from $\tilde{Z}$,
\beq
F_{Z_t}(z)=\left(1_{\{Z_0\leq z\}}P(0,t)+ \int_{0}^t F_{\tilde{Z}_s}( z) d_sP(s,t)\right)\;,
\eeq
 where we have set $P(s,t):=\E[e^{-(\Lambda_t-\Lambda_s)}]$ with $\Lambda_t:=\int_0^t\lambda_u du$ the integrated intensity process.
\end{corollary}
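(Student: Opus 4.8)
The plan is to reduce both cases to the master formula \eqref{eq:FZGivenTheta}, which expresses $F_{Z_t}$ as an integral of $F_{\tilde{Z}_u}$ against the law of the lazy clock $\theta_t=g_t(N)$. Since $\tilde{Z}$ is independent of $N$ in each case, the only real task is to identify the distribution of $g_t(N)$; the remainder is substitution and simplification, exactly mirroring the homogeneous Poisson lemma.

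For the inhomogeneous Poisson case, I would repeat verbatim the synchronization argument from the Poisson lazy-clock lemma. The event $\{g_t(N)=0\}$ coincides with $\{N_t=0\}$, so it carries mass $e^{-\Lambda(t)}$ (using $\Lambda(0)=0$). For $0<s<t$, the event $\{g_t(N)\le s\}$ means no jump occurred on $(s,t]$, i.e.\ $\{N_t=N_s\}$, whose probability is $e^{-(\Lambda(t)-\Lambda(s))}$. Differentiating this c.d.f.\ in $s$ (legitimate since $\Lambda$ is absolutely continuous) yields the density $\lambda(s)\,e^{-(\Lambda(t)-\Lambda(s))}$ on $(0,t)$. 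Inserting the resulting mixed law --- a point mass $e^{-\Lambda(t)}$ at $0$ (which contributes $1_{\{Z_0\le z\}}$, since $\theta_0=0$ forces $\tilde{Z}_0=Z_0$) together with this density --- into \eqref{eq:FZGivenTheta} and factoring out $e^{-\Lambda(t)}$ gives the first displayed formula.

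For the Cox case, I would condition on the whole intensity path $\lambda$. Given $\lambda$, the process $N$ is inhomogeneous Poisson, so the previous step applies path-by-path. Because $\tilde{Z}$ is independent of the entire Cox process $(\lambda,N)$, the tower property lets me take the expectation over $\lambda$ inside the integral, producing
\[
F_{Z_t}(z)=1_{\{Z_0\le z\}}\,\E\!\left[e^{-\Lambda_t}\right]+\int_0^t F_{\tilde{Z}_s}(z)\,\E\!\left[\lambda_s\,e^{-(\Lambda_t-\Lambda_s)}\right]ds .
\]
It then remains to recognize the two expectations in terms of $P(s,t)=\E[e^{-(\Lambda_t-\Lambda_s)}]$: the first is $P(0,t)$ since $\Lambda_0=0$, and the second is the $s$-derivative of $P$, so that $\E[\lambda_s e^{-(\Lambda_t-\Lambda_s)}]\,ds=d_sP(s,t)$, which yields the second formula.

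The one delicate point is this last identification, namely the interchange of the $s$-derivative with the expectation defining $P(s,t)$. I would justify it by dominated convergence, using that $e^{-(\Lambda_t-\Lambda_s)}\le 1$ and that $s\mapsto\Lambda_s$ is a.s.\ differentiable with derivative $\lambda_s$; writing $\partial_s e^{-(\Lambda_t-\Lambda_s)}=\lambda_s e^{-(\Lambda_t-\Lambda_s)}$ pathwise and bounding the difference quotients uniformly gives $\partial_s P(s,t)=\E[\lambda_s e^{-(\Lambda_t-\Lambda_s)}]$. Everything else is the same bookkeeping as in the homogeneous proof.
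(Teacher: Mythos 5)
Your proposal is correct and follows essentially the same route as the paper: the inhomogeneous case is obtained by the same synchronization argument as the homogeneous Poisson lemma (giving the mixed law of $g_t(N)$ with atom $e^{-\Lambda(t)}$ at $0$ and density $\lambda(s)e^{-(\Lambda(t)-\Lambda(s))}$), and the Cox case by conditioning on the intensity path, invoking independence and Tonelli, and identifying $\E[\lambda_s e^{-(\Lambda_t-\Lambda_s)}]\,ds$ with $d_sP(s,t)$. The only difference is that you spell out a justification for interchanging the $s$-derivative with the expectation, which the paper simply asserts via the Leibniz rule.
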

\begin{proof} We start from the inhomogeneous Poisson case, set as hazard rate function $\lambda(u)$ for all $u\in[0,T]$ a sample path $\lambda_u(\omega)$ of the stochastic intensity and take the expectation, which amounts to replace $\lambda(u)$ by $\lambda_u$ (hence $\Lambda(u)$ by $\Lambda_u$) and take the expected value of the resulting cumulative distribution function derived above with respect to the intensity paths:
\beqn
F_{Z_t}(z)&=&\E\left[\E\left[\Pr(Z_t\leq z)|\lambda(u)=\lambda_u,~0\leq u\leq t\right]\right]\\
&=&1_{\{Z_0\leq z\}}\E\left[e^{-\Lambda_t}\right]+ \E\left[\int_{0}^t \lambda_s F_{\tilde{Z}_s}( z)e^{-(\Lambda_t-\Lambda_s)}ds\right]\\
&=&1_{\{Z_0\leq z\}}P(0,t)+ \int_{0}^t F_{\tilde{Z}_s}(z) \E\left[\lambda_se^{-(\Lambda_t-\Lambda_s)}\right]ds
\eeqn
where in the last equality we have used Tonelli's theorem to exchange the integral and expectation operators when applied to non-negative functions as well as independence between $\lambda$ and $\tilde{Z}$.\\
From Leibniz rule, $\lambda_s e^{-(\Lambda_t-\Lambda_s)}=\frac{d}{ds} e^{-(\Lambda_t-\Lambda_s)}$ so
\beq
\E\left[\lambda_s e^{-(\Lambda_t-\Lambda_s)}\right]=\frac{d}{ds}\E\left[ e^{-(\Lambda_t-\Lambda_s)}\right]=\frac{d}{ds}P(s,t)\;.
\eeq
\end{proof}

\begin{remark} Notice that $P(s,t)$ does not correspond to the expectation of $e^{-\int_s^t \lambda_u du}$ conditional upon $\cF_s$, the filtration generated by $\lambda$ up to $s$ as often the case e.g. in mathematical finance. It is an unconditional expectation that can be evaluated with the help of the tower law. In the specific case where $\lambda$ is an affine process for example, $\E\left[e^{-\int_s^t \lambda_u du}|\lambda_s=x\right]$ takes the form $A(s,t)e^{-B(s,t)x}$ for some deterministic functions $A,~B$ so that

$$P(s,t)=\E\left[e^{-\int_s^t \lambda_u du}\right]=\E\left[\E\left[A(s,t)e^{-B(s,t)\lambda_s}\right]\right]=A(s,t)\varphi_{\lambda_s}(iB(s,t))\;.$$
\end{remark}

\begin{example}
In the case $\lambda$ follows a CIR process, $d\lambda_t=k(\theta-\lambda_t)dt+\sigma\sqrt{\lambda_t}dW_t$ with $\lambda_0>0$ then $\lambda_s\sim r_s/c_s$ with $c_s=\nu/(\theta(1-e^{-ks}))$ and $r_s$ is a non-central chi-squared random variable with non-centrality parameter $\nu=4k\theta/\sigma^2$ and $\kappa=c_s\lambda_0e^{-ks}$ the degrees of freedom. So, $\varphi_{\lambda_s}(u)=\E[\e^{i(u/c_s) r_s}]=\varphi_{r_s}(u/c_s)$ where $\varphi_{r_s}(v)=\frac{1}{(1-2iv)^{\kappa/2}}\exp\left(\frac{\nu iv}{1-2iv}\right)$. 
\end{example}

\subsection{Some Lazy martingales without independence assumption}

We have seen that when $\tilde{Z}$ is a martingale and $\theta$ an independent lazy clock, then $(Z_t=\tilde{Z}_{\theta_t}, \, t\geq0)$ is a PWC martingale. We now give an example where the lazy time-change $\theta$ is not independent from the latent process $\tilde{Z}$.
\begin{proposition}
Let $B$ and $W$ be two Brownian motions with correlated coefficient $\rho$ and $f$ a continuous function. Define the lazy clock : 
$$g_t^f(W) := \sup\{s\leq t,\; W_s=f(s)\}\;.$$
Let $h(W)$ be a progressively measurable process with respect to $W$ and assume that there exists a deterministic function $\psi$ such that :
$$\int_0^{g_t^f(W)} h_u(W) dW_u = \psi(g_t^f(W))\;.$$
Then, the process $Z_t:=\tilde{Z}_{g_t^f(W)}$ where $\tilde{Z}_t:=\int_0^{t} h_u(W) dB_u - \rho\psi(t)$is a PWC martingale.
\end{proposition}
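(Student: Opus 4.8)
The plan is to strip out the correlation by a Brownian decomposition, which collapses $Z$ into a lazy-clock time change of a Brownian integral, and then to prove the martingale property \emph{in the natural filtration $\cF^Z$ of $Z$}, since (as I will explain) $Z$ is genuinely \emph{not} a martingale in the larger filtration generated jointly by $B$ and $W$. Assuming $|\rho|<1$, I would set $W^\perp := (B-\rho W)/\sqrt{1-\rho^2}$. By L\'evy's characterization, together with $\langle B-\rho W, W\rangle_t = (\rho-\rho)t = 0$ and the joint Gaussianity of $(B,W)$, $W^\perp$ is a Brownian motion \emph{independent} of $W$, and $dB_u = \rho\,dW_u + \sqrt{1-\rho^2}\,dW^\perp_u$. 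Writing $\theta_t := g_t^f(W)$ and expanding,
$$Z_t = \tilde{Z}_{\theta_t} = \rho\int_0^{\theta_t} h_u\,dW_u + \sqrt{1-\rho^2}\int_0^{\theta_t} h_u\,dW^\perp_u - \rho\,\psi(\theta_t).$$
The standing hypothesis $\int_0^{\theta_t} h_u\,dW_u = \psi(\theta_t)$ makes the first and third terms cancel, giving the clean representation $Z_t = \sqrt{1-\rho^2}\,M_{\theta_t}$ with $M_t := \int_0^t h_u(W)\,dW^\perp_u$ (the case $|\rho|=1$ is degenerate; for $\rho=1$ one gets $Z\equiv 0$).

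Next I would dispatch the piecewise-constant property and integrability. The clock $\theta = g^f(W)$ is a lazy clock: taking $A_t = W_t-f(t)$, which is continuous, its zero set $\{t:W_t=f(t)\}$ has a.s.\ zero Lebesgue measure, since $\E[\mathrm{Leb}\{t:W_t=f(t)\}]=\int_0^T \Pr(W_t=f(t))\,dt=0$, so the characterization proposition for lazy clocks applies and $\theta$ has piecewise constant paths. Hence $Z=\sqrt{1-\rho^2}\,M\circ\theta$, a continuous process composed with a piecewise constant time change, is itself piecewise constant with summable jumps. For integrability, under the standing assumption $\E\int_0^t h_u^2\,du<\infty$ (already needed for $\tilde{Z}$ to be a martingale), $M$ is a true martingale and $|M|$ a submartingale; conditioning on $\cF^W_\infty$, under which $\theta_t\le t$ is deterministic, yields $\E[|M_{\theta_t}|\mid \cF^W_\infty]\le \E[|M_t|\mid \cF^W_\infty]$ and thus $\E|Z_t|\le \sqrt{1-\rho^2}\,\E|M_t|<\infty$.

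The heart of the argument, and the main obstacle, is the martingale property under dependence: $\theta$ is driven by $W$ while $M$ is driven by $W^\perp$ but has the $W$-dependent integrand $h$, so $\theta$ and $M$ are \emph{not} independent and the earlier independence lemmas do not apply. Worse, $Z$ is not a martingale in $\cF^{W}\vee\cF^{W^\perp}$: conditioning there reveals both the hidden value $M_s$ between passages and whether a passage will occur in $(s,t]$, leaving a non-vanishing correction proportional to $(M_s-M_{\theta_s})\,\Pr(\theta_t>s\mid\cF_s)$. The resolution is that $\cF^Z_s$ carries far less information: the jumps of $Z$ identify exactly the passage times $\tau_i\le s$ of $W$ to $f$ and the values $M_{\tau_i}$ of $M$ there, so $Z_s=\sqrt{1-\rho^2}\,M_{\theta_s}$ with $\theta_s$ the last such passage, but nothing about $M$ strictly between passages nor about future passages. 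I would therefore compute $\E[M_{\theta_t}\mid\cF^Z_s]=\E[\,\E[M_{\theta_t}\mid\cF^Z_s\vee\cF^W_\infty]\mid\cF^Z_s]$. Given $\cF^W_\infty$, the process $M$ is a continuous Gaussian—hence Markov—martingale with respect to $\cF^{W^\perp}$, and $\theta_t\ge\theta_s$ is deterministic; since the only $M$-values appearing in the conditioning $\cF^Z_s\vee\cF^W_\infty$ that precede $\theta_t$ are those at passage times $\le s$, all bounded by $\theta_s$, the Markov and martingale properties collapse the inner conditional expectation to $M_{\theta_s}$. As $M_{\theta_s}=Z_s/\sqrt{1-\rho^2}$ is $\cF^Z_s$-measurable, projecting back gives $\E[Z_t\mid\cF^Z_s]=Z_s$.

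The two delicate points I expect to require care are, first, the precise identification of what $\cF^Z_s$ measures—namely that the jumps of $Z$ coincide with the synchronizations of $\theta$ and reveal $M$ only at the past passage times, which uses $\Pr(M_u=M_v)=0$ for $u\neq v$ (valid since $M$ is a nonconstant continuous martingale)—and, second, the Markov collapse of the conditional expectation past the last observed passage, where one must verify that no conditioning information lies in the interval $(\theta_s,\theta_t]$. Everything else is routine once the representation $Z_t=\sqrt{1-\rho^2}\,M_{\theta_t}$ is in hand.
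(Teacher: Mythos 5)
Your decomposition and cancellation are exactly the paper's: write $B=\rho W+\sqrt{1-\rho^2}\,W^\perp$ with $W^\perp$ independent of $W$, use the hypothesis $\int_0^{\theta_t}h_u\,dW_u=\psi(\theta_t)$ to kill the correlated part, and arrive at $Z_t=\sqrt{1-\rho^2}\int_0^{\theta_t}h_u(W)\,dW^\perp_u$. Where you diverge is the final step. The paper applies a Dambis--Dubins--Schwarz type identity to rewrite this as $\sqrt{1-\rho^2}\,\beta_{T_t}$ with $T_t=\int_0^{\theta_t}h_u^2(W)\,du$ and $\beta$ a Brownian motion independent of $W$ (hence of the clock $T$), and then concludes by independence as in Section 2.3; you keep $M=\int h\,dW^\perp$ and prove the martingale property in $\cF^Z$ directly, via the inclusion $\cF^Z_s\vee\cF^W_\infty\subseteq\cF^W_\infty\vee\cF^{W^\perp}_{\theta_s}$ and optional stopping for $M$ in the filtration initially enlarged by $\cF^W_\infty$. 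Your route is sound, and your observation that $Z$ is \emph{not} a martingale in $\cF^W\vee\cF^{W^\perp}$ --- with the explicit correction term proportional to $(M_s-M_{\theta_s})\Pr(\theta_t>s\mid\cF_s)$ --- is correct and is a point the paper does not make; it pins down the filtration in which the statement can hold. You are also right that $M$ and $\theta$ are not independent; the paper's one-line justification works only because the DDS step transfers all the $W$-dependence into the clock, restoring independence between the clock and the driving Brownian motion. In effect your conditioning argument is the proof of the step the paper leaves implicit.

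The one genuine gap is your assertion that $Z=\sqrt{1-\rho^2}\,M\circ\theta$ is ``piecewise constant with summable jumps.'' Constancy between synchronizations is clear, but summability is false in general. Take $f\equiv0$, $h\equiv1$, $\rho=0$, so that $Z_t=B_{g^0_t(W)}$. The jumps of $Z$ are the increments $B_{r_i}-B_{l_i}$ over the excursion intervals $(l_i,r_i)$ of $W$ away from $0$ completed before $t$; conditionally on $W$ these are independent centered Gaussians with variances $\zeta_i=r_i-l_i$, so $\sum_i|\Delta Z|$ has the law of $\sum_i\sqrt{\zeta_i}\,|N_i|$ with $N_i$ i.i.d.\ standard normal. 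Excursion theory gives $\sum_i\sqrt{\zeta_i}=+\infty$ a.s.\ (the number of excursions of length at least $\varepsilon$ before $t$ grows like $\varepsilon^{-1/2}$), and then $\sum_i|\Delta Z_{s_i}|=+\infty$ a.s.\ by the three-series theorem. Hence $Z$ has infinite variation and fails Definition 1.1, even though it is a genuine martingale that is constant off a Lebesgue-null set. To be fair, the paper's own proof is equally silent on this and the defect really lies in the statement (it disappears for clocks with finitely many jumps, such as the Poisson lazy clock, or if ``PWC'' is weakened to ``constant off the zero set of $W-f$''); but you assert the summability affirmatively, and as written that step fails.
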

\begin{proof}
Let $W^\perp$ be a Brownian motion independent from $W$ such that $B = \rho W+\sqrt{1-\rho^2}\,W^\perp $. 
The time-change yields :
\begin{align*}
\int_0^{g_t^f(W)} h_u(W) dB_u - \rho\psi(g_t^f(W)) &=  \int_0^{g_t^f(W)} h_u(W) dB_u -
\rho \int_0^{g_t^f(W)} h_u(W ) dW_u \\
&= \sqrt{1-\rho^2}\int_0^{g_t^f(W)} h_u(B) dW^\perp_u \\
&=   \sqrt{1-\rho^2} W^\perp_{\int_0^{g_t^f(W)} h^2_u(B)du}
\end{align*}
which is a PWC martingale since $g^f(W)$ and $h(B)$ are  independent from $W^\perp$.
\end{proof}
It is interesting to point out here that the latent process  $\tilde{Z}$ is, in general, \emph{not} a martingale (not even a local martingale). It becomes a martingale thanks to the lazy time-change. 

\begin{example}
We give below several examples of application of this  proposition.
\begin{enumerate}
\item Take $h_u=1$. Then, $\psi=f$ and $\left(B_{g_t^f(W)} -\rho f(g_t^f(W)), \,t\geq0\right)$ is a PWC martingale.\\
More generally, we may observe from the proof above that if $H$ is a space-time harmonic function (i.e. $(t,z)\rightarrow H(t,z)$ is $\mathcal{C}^{1,2}$ and such that $\frac{\partial H}{\partial t} + \frac{1}{2} \frac{\partial^2 H}{\partial z^2} =0$), then the process $$\left(H\left(B_{g_t^f(W)}-\rho f(g_t^f(W)),\, (1-\rho^2) g_t^f(W)\right),\; t\geq0\right)$$ is a PWC martingale. 
\item Following the same idea, take $h_u(W) = \frac{\partial H}{\partial z}(W_u,u)$ for some harmonic function $H$. Then 
\begin{align*}\int_0^{g_t^f(W)} \frac{\partial H}{\partial z}(W_u,u) dW_u &= H\left(W_{g_t^f(W)}, g_t^f(W)\right)-H(0,0) \\&=  H\left(f(g_t^f(W)), g_t^f(W)\right)-H(0,0)
\end{align*}
and the process $\left(\int_0^{g_t^f(W)} \frac{\partial H}{\partial z}(W_u,u) dB_u -\rho H\left(f(g_t^f(W)),g_t^f(W)\right) ,\,t\geq0\right)$ is a PWC martingale.  
\item As a last example, take $f=0$ and $h_u=r(L^0_u)$ where $r$ is a $\mathcal{C}^1$ function and $L^0$ denotes the local time of $W$ at 0. Then, integrating by parts :
$$\int_0^{g_t^f(W)} r(L^0_u) dW_u = r(L_{g_t^f(W)}^0) W_{g_t^f(W)} - \int_0^{g_t^f(W)} W_u r^\prime(L^0_u) dL^0_u = 0$$
since the support of $dL^0$ is included in $\{u, W_u=0\}$. Therefore, the process $\left(\int_0^{g_t^f(W)} r(L^0_u)dB_u, \, t\geq0\right)$ is a PWC martingale.
\end{enumerate}
\end{example}

\noindent
\section{Numerical simulations}

In this section, we briefly sketch the construction schemes to sample paths of the lazy clocks discussed above. These procedures have been used to generate Fig.~\ref{fig:LC}. Finally, we illustrate sample paths and distributions of a specific martingale in $[0,1]$ time-changed with a Poisson lazy clock.

\subsection{Sampling of lazy clock and lazy martingales}

By definition, the number of jumps of a lazy clock $\theta$ on $[0,T]$ is countable, but may be infinite. Therefore, except in some specific cases (such as the Poisson lazy clock), an exact simulation is impossible. Using a discretization grid, the simulated trajectories of a lazy clock $\theta$ on $[0,T]$ will take the form
$$\theta_{t}:=\sup\{\tau_i,\tau_i\leq t\}$$
where $\tau_0:=0$ and $\tau_1,\tau_2,\ldots$ are (some of) the synchronization times of the lazy clock up to time $T$. We can thus focus on the sampling times $\tau_1,\tau_2\ldots$ whose values are no greater than $T$. 

\subsubsection*{Poisson lazy clock}
Trajectories of a Poisson lazy clock $\theta_t(\omega)=g_t(N(\omega))$ on a fixed interval $[0,T]$ are very easy to obtain thanks to the properties of Poisson jump times.\medskip

\fbox{%
    \parbox{0.9\textwidth}{		
		\begin{minipage}[c]{0.9\columnwidth}
\vspace{0.2cm}
\begin{algorithm}[Sampling of a Poisson lazy clock]~~~~~~~~~~~~~~~~~~~~~~
\begin{enumerate}
\item Draw a sample $n=N_T(\omega)$ for the number of jump times of $N$ up to $T$: $N_T\sim Poi(\lambda T)$.
\item Draw $n$ i.i.d. samples from a standard uniform $(0,1)$ random variable $u_i=U_i(\omega)$, $i\in \{1,2,\ldots,n\}$  sorted in increasing order $u_{(1)}\leq u_{(2)}\leq \ldots \leq u_{(n)} $.
\item Set $\tau_i:=Tu_{(i)}$ for $i\in \{1,2,\ldots,n\}$.
\end{enumerate}
\end{algorithm}		   
\end{minipage}\vspace{0.2cm}
}
}

\subsubsection*{Brownian lazy clock}

Sampling a trajectory for a Brownian lazy clock requires the last zero of a Brownian bridge. This is the purpose of the following lemma.

\begin{lemma}
Let $W^{x,y,t}$ be a Brownian bridge on $[0,t]\;, t\leq T$, starting at  $W_0^{x,y,t}=x$ and ending $W_t^{x,y,t}=y$, and define its last passage time at 0 :
$$g_t(W^{x,y,t}):= \sup\{s\leq t,\; W^{x,y,t}_s=0\}.$$
Then, the cumulative distribution function $F(x,y,t; s)$ of $g_t(W^{x,y,t})$ is given, for $s\in[0,t]$ by :
\beqn
\Pr(g_t(W^{x,y,t})\leq s)=F(x,y,t;s)&:=&1-\e^{-\frac{xy}{t}}\left(d_{+}(x,y,t;s)+d_{-}(x,y,t;s)\right)\;,\nonumber\\
\text{where }\qquad d_{\pm}(x,y,t; s)&:=&\e^{\frac{\pm|xy|}{t}}\Phi\left(\mp|x|\sqrt{\frac{t-s}{st}}-|y|\sqrt{\frac{s}{t(t-s)}}\right)\;.\nonumber
\eeqn
In particular, the probability that $W^{x,y,t}$ does not hit 0  during $[0,t]$ equals:
$$\Pr(g_t(W^{x,y,t})=0)=F(x,y,t;0) = 1-e^{-\frac{xy + |xy|}{t}}.$$
Note also the special case when $y=0$ :
$$\Pr(g_t(W^{x,0,t})=t)=1.$$
\end{lemma}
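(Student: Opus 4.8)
The plan is to compute $\Pr(g_t(W^{x,y,t})\leq s)$ directly, by translating the event into the probability that the bridge avoids $0$ on the terminal interval $(s,t]$ and then conditioning at the intermediate time $s$. Since $W^{x,y,t}$ is continuous and $g_t$ is its last zero, for $s\in[0,t)$ we have, away from the degenerate endpoint, $\{g_t(W^{x,y,t})\leq s\}=\{W^{x,y,t}_u\neq 0\text{ for all }u\in(s,t]\}$. The case $y=0$ is special: the bridge ends at $0$, so $g_t=t$ almost surely and $\Pr(g_t=t)=1$, which I would dispatch separately.

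Next I would assemble two ingredients. First, by the Markov property of the Brownian bridge, conditionally on $W^{x,y,t}_s=z$ the restriction of the bridge to $[s,t]$ is again a Brownian bridge, now from $z$ at time $s$ to $y$ at time $t$, of length $\tau:=t-s$ and independent of the past. Second, I would invoke the classical avoidance probability: a Brownian bridge running from $a$ to $b$ over a span $\tau$ avoids $0$ with probability $(1-e^{-2ab/\tau})\mathbf{1}_{\{ab>0\}}$ (it must cross $0$ when $a$ and $b$ have opposite signs), which is the standard reflection-principle computation and can be quoted as a lemma-level fact. Combined with the Gaussian marginal $W^{x,y,t}_s\sim\mathcal{N}(\mu,\sigma^2)$, where $\mu=\frac{(t-s)x+sy}{t}$ and $\sigma^2=\frac{s(t-s)}{t}$, this yields
\[
\Pr(g_t\leq s)=\int_{\{zy>0\}}\frac{1}{\sqrt{2\pi}\,\sigma}\,e^{-(z-\mu)^2/(2\sigma^2)}\left(1-e^{-2zy/\tau}\right)dz.
\]

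The remaining task is to evaluate this single Gaussian integral. Using the symmetry $W\leftrightarrow -W$, which leaves $g_t$ unchanged and sends $(x,y)$ to $(-x,-y)$, I would reduce to $y>0$ and integrate over $z>0$. The first term gives $\Pr(W^{x,y,t}_s>0)=\Phi(\mu/\sigma)$ with $\mu/\sigma=x\sqrt{(t-s)/(ts)}+y\sqrt{s/(t(t-s))}$. For the second term I would complete the square in the exponent: the $z$-independent part collapses to the prefactor $e^{-2xy/t}$, while the shifted Gaussian integrated over $z>0$ produces a second $\Phi$ evaluated at $x\sqrt{(t-s)/(ts)}-y\sqrt{s/(t(t-s))}$. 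Absorbing the leading constant via $1-\Phi(a)=\Phi(-a)$ then reorganizes the two terms into $1-e^{-xy/t}(d_+(x,y,t;s)+d_-(x,y,t;s))$; the absolute values appear automatically once the two sign cases for $x$ are matched through $e^{-xy/t}=e^{\mp|xy|/t}$. Finally I would verify the boundary cases by letting $s\to 0$ (there $\sqrt{(t-s)/(ts)}\to+\infty$ forces $d_+\to 0$ and $d_-\to e^{-|xy|/t}$, recovering $F(x,y,t;0)=1-e^{-(xy+|xy|)/t}$) and $y\to 0$.

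The main obstacle is the bookkeeping in the Gaussian integral: completing the square cleanly enough that the exponential constant reduces exactly to $e^{-2xy/t}$, and tracking the signs of $x$ and $y$ so that the answer assembles into the compact absolute-value form $d_\pm$ rather than a case-split expression. The probabilistic input, namely the bridge avoidance probability, is standard; it is the analytic simplification where the care is needed.
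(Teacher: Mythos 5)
Your argument is correct, and it reaches the stated formula by a genuinely different route than the paper. The paper first derives the \emph{density} of $g_t(W^{x,y,t})$ via time reversal and the absolute continuity of the bridge law with respect to Wiener measure (quoting Salminen), and then obtains the closed form for $F$ by computing a modified Laplace transform of $F$ and inverting it through explicit $\mathrm{Erfc}$-type identities. You instead condition at the intermediate time $s$: by the Markov property the post-$s$ piece is a bridge from $W^{x,y,t}_s$ to $y$ of length $t-s$, whose zero-avoidance probability $\bigl(1-e^{-2ab/\tau}\bigr)\mathbf{1}_{\{ab>0\}}$ is the standard reflection-principle fact, and the answer reduces to a single Gaussian integral evaluated by completing the square. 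I checked the key analytic step: with $\mu=\frac{(t-s)x+sy}{t}$, $\sigma^2=\frac{s(t-s)}{t}$ and $c=2y/(t-s)$, one gets $-\mu c+\tfrac12 c^2\sigma^2=-\tfrac{2xy}{t}$ and a shifted mean $\mu-c\sigma^2=\frac{(t-s)x-sy}{t}$, so for $y>0$ the result is $\Phi(xA+yB)-e^{-2xy/t}\Phi(xA-yB)$ with $A=\sqrt{(t-s)/(st)}$, $B=\sqrt{s/(t(t-s))}$, which agrees with $1-e^{-xy/t}(d_++d_-)$ in both sign cases for $x$; the boundary checks at $s\to0$ and $y=0$ also come out right. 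Your approach buys a more elementary, self-contained derivation of the CDF that bypasses Laplace inversion entirely; the paper's route produces the density of $g_t(W^{x,y,t})$ as an explicit intermediate object, which has some independent interest. Two small points to make explicit if you write this up: the convention $\sup\emptyset=0$ is what makes $\{g_t\leq s\}$ coincide with the event that the bridge has no zero on $(s,t]$, and the restriction of the integral to $\{zy>0\}$ should be justified by noting that a bridge between values of opposite signs crosses $0$ with probability one.
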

\begin{proof}
Using time reversion and the absolute continuity formula of the Brownian bridge with respect to the free Brownian motion (see Salminen \cite{Sal97}), the density of $g_t(W^{x,y,t})$ is given, for $y\neq0$, by :
$$
\Pr(g_t(W^{x,y,t})\in ds) = \frac{|y|\sqrt{t}}{\sqrt{2\pi}} e^{\frac{(y-x)^2}{2t}} \frac{1}{\sqrt{s}(t-s)^{3/2}}e^{-\frac{x^2}{2s}}e^{-\frac{y^2}{2(t-s)}}\, ds.
$$
Integrating over $[0,t]$, we first deduce that 
\begin{equation}\label{eq:Ftt}
\frac{|y|\sqrt{t}}{\sqrt{2\pi}} \int_0^t  \frac{e^{-\frac{x^2}{2s}}}{\sqrt{s}} \frac{e^{-\frac{y^2}{2(t-s)}}}{(t-s)^{3/2}}\, ds = \exp\left(\frac{(|y|+|x|)^2}{2t}\right).
\end{equation}
We shall now compute a modified Laplace transform of $F$, and then invert it. Integrating by parts and using (\ref{eq:Ftt}), we deduce that :
$$
\lambda \int_0^t \frac{e^{-\frac{\lambda}{2s}}}{2s^2} F(x,y,t; s) ds = e^{-\frac{\lambda}{2t}}  - e^{-\frac{\lambda}{2t}} \exp\left(-\frac{xy}{t} -\frac{|y| \sqrt{\lambda+x^2}}{t}\right).
$$
Observe next that by a change of variable :
$$\lambda \int_0^t \frac{e^{-\frac{\lambda}{2s}}}{2s^2} F(x,y,t; s) ds  = \lambda e^{-\frac{\lambda}{2t}}\int_0^{+\infty} e^{-\lambda v} F\left(x,y,t; \frac{1}{2v+1/t}\right) dv $$
hence
$$\int_0^{+\infty} e^{-\lambda v} F\left(x,y,t; \frac{1}{2v+1/t}\right) dv = \frac{1}{\lambda}  - \frac{1}{\lambda} \exp\left(-\frac{xy}{t} -\frac{|y| \sqrt{\lambda+x^2}}{t}\right)$$
and the result follows by inverting this Laplace transform thanks to the formulae, for $a>0$ and $b>0$ :
$$\frac{1}{\lambda}\exp\left(-a\sqrt{\lambda+x^2}\right) = \frac{a}{2\sqrt{\pi}} \int_0^{+\infty} e^{-\lambda v} \int_0^v e^{-ux^2} \frac{1}{u^{3/2}} e^{-\frac{a^2}{4u}}du\, dv$$
and
$$\int_0^z e^{-au - b/u} \frac{du}{u^{3/2}} = \frac{\sqrt{\pi}}{2\sqrt{b}}\left(e^{2\sqrt{ab}}\text{Erfc}\left(\sqrt{\frac{b}{z}} - \sqrt{az}\right)+e^{-2\sqrt{ab}}\text{Erfc}\left(\sqrt{\frac{b}{z}} + \sqrt{az}\right) \right).$$
\end{proof}

Simulating a continuous trajectory of a Brownian lazy clock $\theta$ in a perfect way is an impossible task. The reason is that when a Brownian motion reaches zero at a specific time say $s$, it does so infinitely many times on $(s,s+\varepsilon]$ for all $\varepsilon>0$. Consequently, it is impossible to depict such trajectories in a perfect way. Just like for the Brownian motion, one could only hope to sample trajectories on a discrete time grid, where the maximum stepsize provides some control about the approximation, and corresponds to a basic unit of time. By doing so, we disregard the specific jump times of $\theta$, but focus on the supremum of the zeroes of a Brownian motion in these intervals. To do this, we proceed as follows.\medskip

\fbox{%
    \parbox{0.9\textwidth}{		
		\begin{minipage}[c]{0.9\columnwidth}
\vspace{0.2cm}
\begin{algorithm}[Sampling of a Brownian lazy clock]~~~~~~~~~~~~~~~~~~~~~~
\begin{enumerate}
\item Fix a number of steps $n$ such that time step $\delta=T/n$ corresponds to the desired time unit.
\item Sample a Brownian motion $w=W(\omega)$ on the discrete grid $[0,\delta,2\delta,\ldots,n\delta]$.
\item In each interval $((i-1)\delta,i\delta]$, $i\in\{ 1,2,\ldots,n\}$, draw a uniform $(0,1)$ random variable $u_i=U_i(\omega)$
\begin{itemize}
\item If $u_i< F(w_{(i-1)\delta},w_{i\delta},\delta;0)$ then $w$ does not reach 0 on the interval 
\item Otherwise, set the supremum $g_i$ of the last zero of $w$ as the $s$-root of $F(w_{(i-1)\delta},w_{i\delta},\delta;s)-u_i$
\end{itemize} 
\item Identify the $k$ intervals ($1\leq k\leq n$) in which $w$ has a zero, and set  $\tau_j:=i_j\delta+g_{i_j}$, $j\in\{1,\ldots,k\}$ where $i_j\delta$ is the left bound of the interval.
\end{enumerate}

\end{algorithm}		   
\end{minipage}\vspace{0.2cm}
}
}

\subsection{Example: $\Phi$-martingale sampled with a Poisson lazy clock}
We conclude this note by providing simulations of a PWC martingale in $[0,1]$ (as well as its probability distribution) obtained by sampling the ``so-called'' $\Phi$-martingale with a Poisson lazy clock.  Lazy martingales evolving in $\mathbb{R}$ (resp. $\mathbb{R}^+$) can be found in a similar way by considering a Brownian motion $ W$ (resp. Dol\'eans-Dade exponential $\cE(W)$) as latent process $\tilde{Z}$. The resulting expressions are equally tractable.


%
%

\begin{example}[PWC martingale on $(0,1)$]
Let $N$ be a Poisson process with intensity $\lambda$ and $\tilde{Z}$ be the $\Phi$-martingale \cite{Vrins16} with constant diffusion coefficient $\eta$,

\beq
\tilde{Z}_{t}:=\Phi\left(\Phi^{-1}(Z_0)e^{\eta^2/2 t}+\eta \int_0^{t}e^{\frac{\eta^2}{2}(t-s)}dW_s\right)\;.
\eeq
where $\Phi$ denotes as before the standard Normal CDF. Then, the stochastic process $Z$ defined as $Z_t:=\tilde{Z}_{g_t(N)}$, $t\geq 0$, is a PWC martingale on $(0,1)$ with CDF
\beq
F_{Z_t}(z)=e^{-\lambda t}\left(1_{\{Z_0\leq z\}}+ \lambda \int_{0}^t \Phi\left(\frac{\Phi^{-1}(z)-\Phi^{-1}(Z_0)e^{\eta^2/2 u}}{\sqrt{e^{\eta^2u}-1}}\right) e^{\lambda u}du\right)\;.
\eeq
\end{example}

Some sample paths for $\tilde{Z}$ and $Z$ are drawn on Fig.~\ref{fig:SP}. Notice that this martingale $\tilde{Z}$ can be simulated without error using the exact solution.

%
\begin{figure}[h!]
\centering
\subfigure[$\eta=25\%$, $\lambda=20\%$]{\includegraphics[width=0.48\columnwidth]{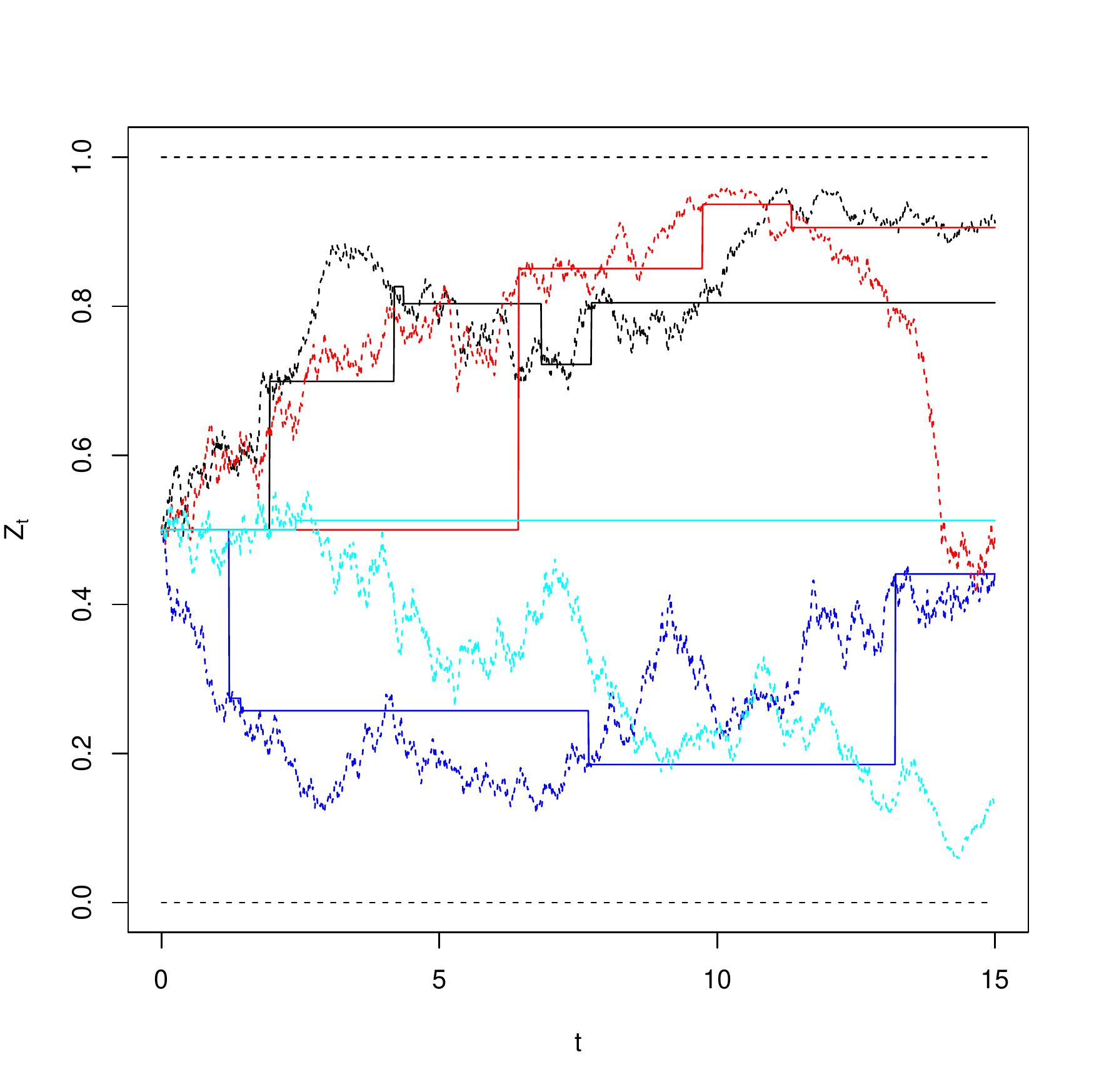}}\hspace{0.2cm}
\subfigure[$\eta=15\%$, $\lambda=50\%$]{\includegraphics[width=0.48\columnwidth]{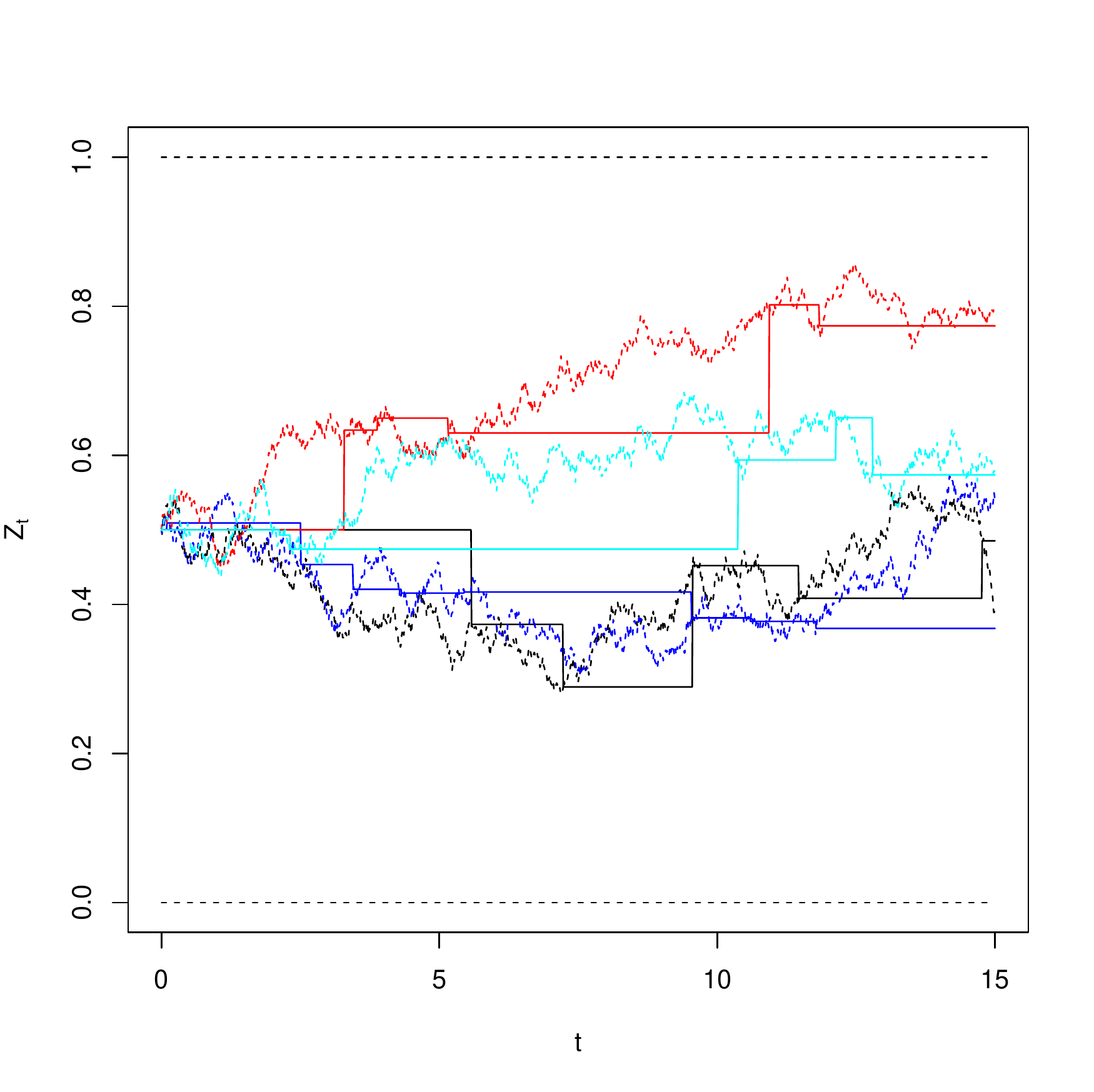}}
\caption{Four sample paths of $\tilde{Z}$ (circles) and $Z$ (no marker) up to $T=15$ years where $\tilde{Z}$ is the $\Phi$-martingale with $Z_0=0.5$.}\label{fig:SP}
\end{figure}

Figure~\ref{fig:CDF} gives the cumulative distribution function of $Z$ and $\tilde{Z}$ where the later is a $\Phi$-martingale. The main differences between these two sets of curves result from the fact that $\Pr(\tilde{Z}_t=Z_0)=0$ for all $t>0$ while $\Pr(Z_t=Z_0)=\Pr(\tilde{Z}_{g_t(N)}=Z_0)=\Pr(N_t=0)>0$ and that there is a delay resulting from the fact that $Z_t$ correspond to some past value of $\tilde{Z}$.
\begin{figure}[h!]
\centering
\subfigure[$Z_0=50\%$, $\eta=25\%$, $\lambda=20\%$]{\includegraphics[width=0.48\columnwidth]{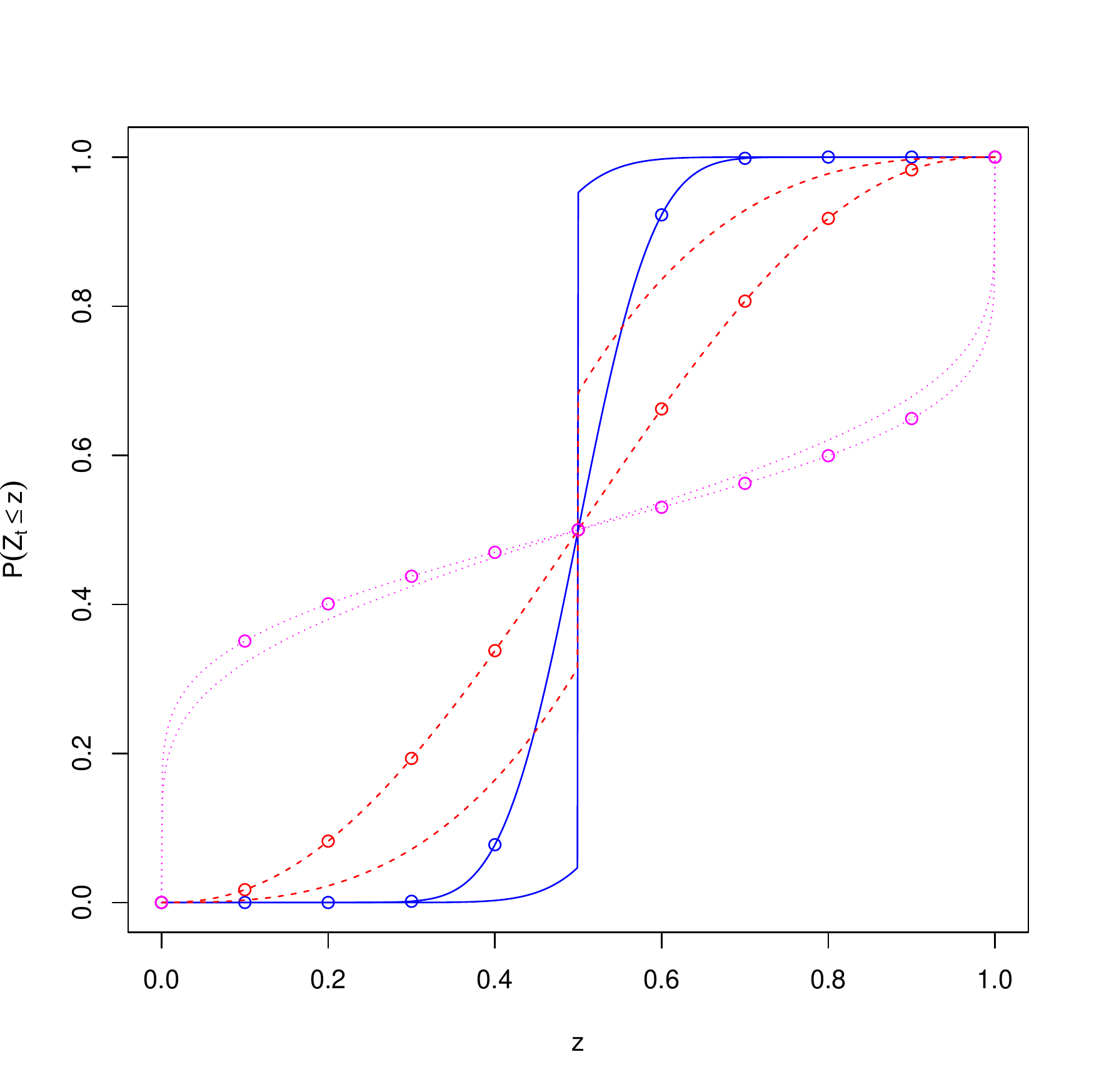}}\hspace{0.2cm}
\subfigure[$Z_0=50\%$ $\eta=15\%$, $\lambda=50\%$]{\includegraphics[width=0.48\columnwidth]{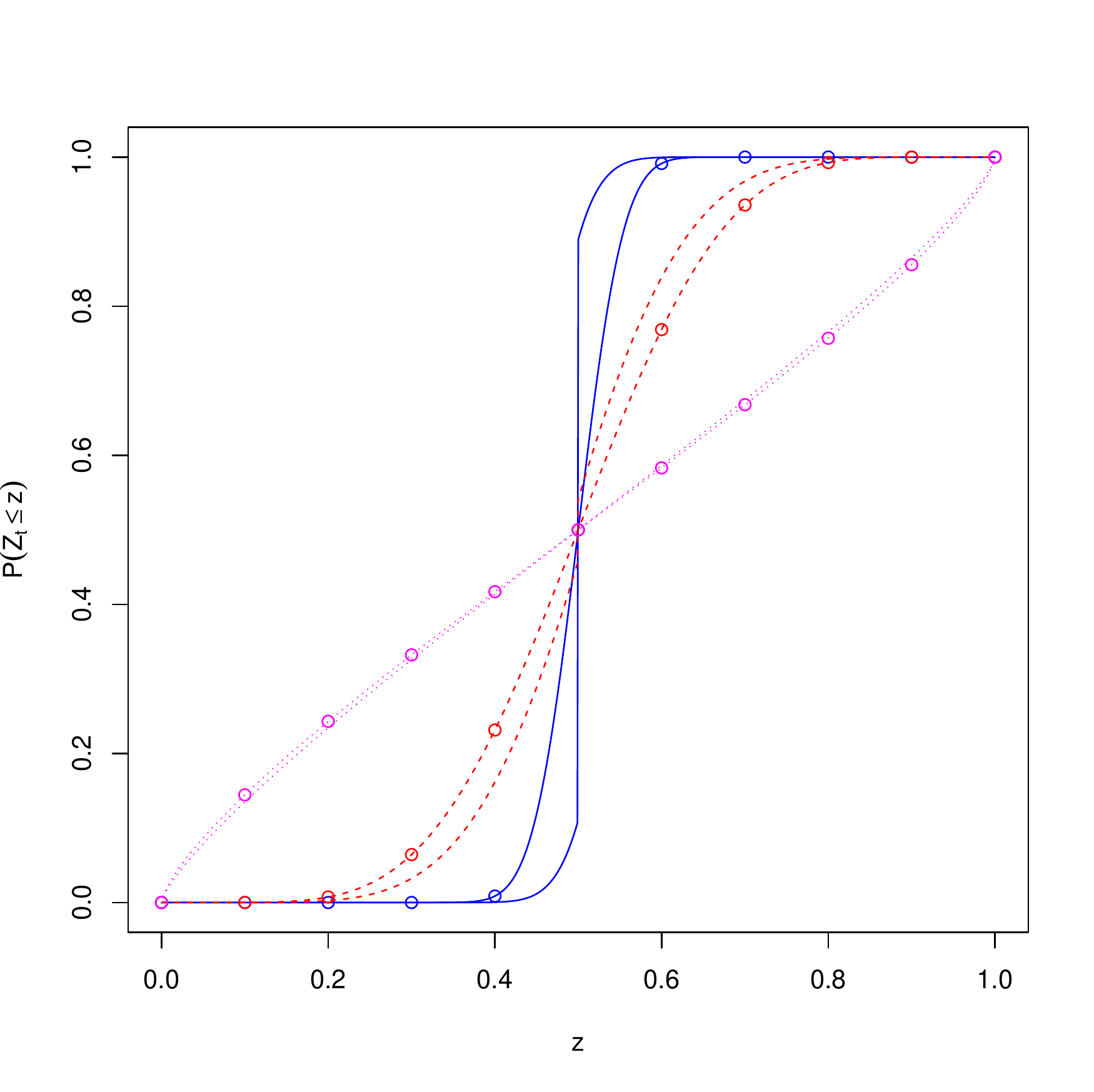}}\\
\subfigure[$Z_0=35\%$ $\eta=15\%$, $\lambda=50\%$]{\includegraphics[width=0.48\columnwidth]{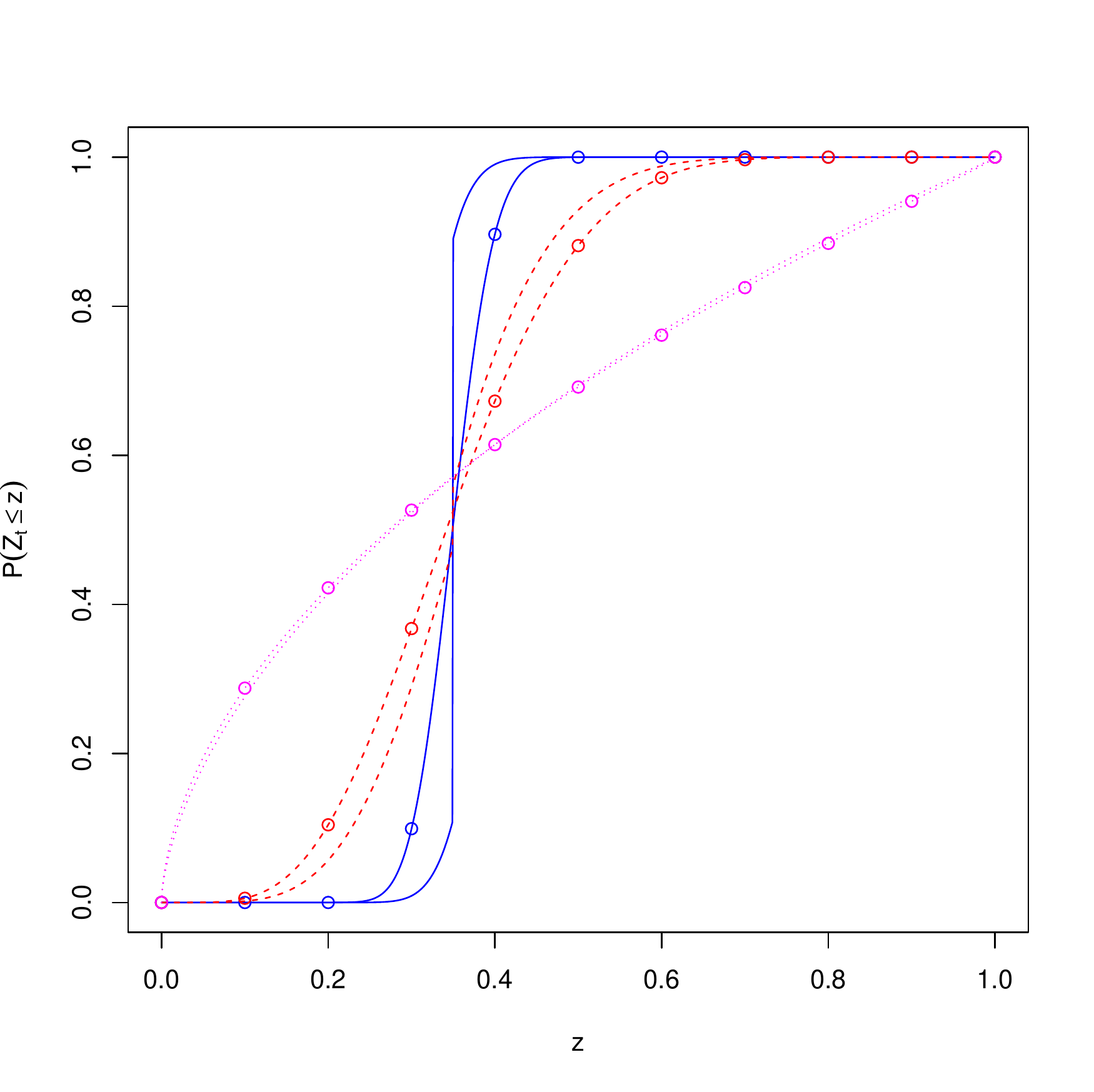}}\hspace{0.2cm}
\subfigure[$Z_0=35\%$, $\eta=25\%$, $\lambda=5\%$]{\includegraphics[width=0.48\columnwidth]{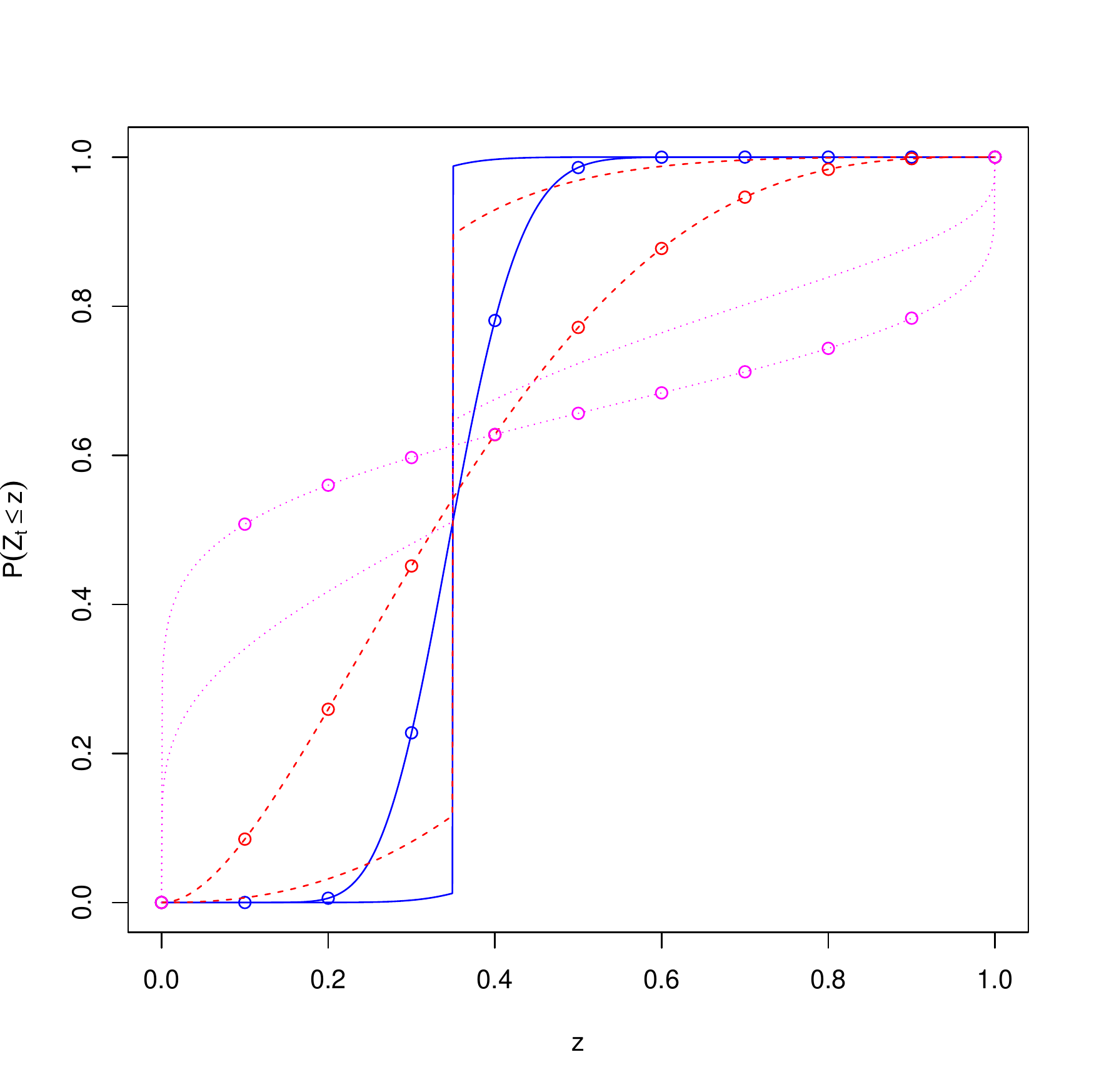}}
\caption{Cumulative distribution function of $\tilde{Z}_t$ (circles) and $Z_t$ (no marker) where $\tilde{Z}$ is the $\Phi$-martingale with initial value $Z_0$ and $t$ equals 0.5 (blue solid), 5 (red, dashed) and 40 (magenta, dotted) years.}\label{fig:CDF}
\end{figure}
%

\section{Conclusion and future research}

In this paper, we focused on the construction of piecewise constant martingales that is, martingales whose trajectories are piecewise constant. Such processes are indeed good candidates to model the dynamics of conditional expectations of random variables under partial (punctual) information. The time-changed approach proves to be quite powerful: starting with a martingale in a given range, we obtain a PWC martingale by using a piecewise constant time-change process. Among those time-change processes that lazy clocks are specifically appealing: these are time-change processes staying always in arrears to the real clock, and that synchronizes to the calendar time at some random times. This ensures that $\theta_t\leq t$ which is a convenient feature when one needs to sample trajectories of the time-change process. Such random times can typically be characterized as last passage times, and enjoy appealing tractability properties. The last jump time of a Poisson process before the current time for instance exhibits a very simple distribution. Other lazy clocks have been proposed as well, based on Brownian motions and Bessel processes, some of which rule out the probability mass at zero. Finally, we provided several martingales time-changed with lazy clocks, called lazy martingales, whose range can be any interval in $\mathbb{R}$ (depending on the range of the latent martingale) and showed that the corresponding distributions can be easily obtained from the law of iterated expectations.\medskip

Yet, tractability and even more importantly, the martingale property result from the independence assumption between the latent martingale and the time-change process. In practice however, it might be more realistic to consider cases where the sample frequency (synchronization rate of the lazy clock $\theta$ to the real clock) depends on the level of the latent martingale $Z$. Finding a tractable model allowing for this coupling remains an open question and is the purpose of future research.



\ifdefined \MyBib
	\bibliography{\MyBib}

\begin{thebibliography}{10}

\bibitem{Bert96}
J.~Bertoin.
\newblock {\em L\'evy processes}, volume 121 of {\em Cambridge Tracts in
  Mathematics}.
\newblock Cambridge University Press, Cambridge, 1996.

\bibitem{Cont04}
R.~Cont and P.~Tankov.
\newblock {\em Financial Modelling with Jump Processes}.
\newblock Chapman \& Hall, 2004.

\bibitem{GJY}
A.~G\"oing-Jaeschke and M.~Yor.
\newblock A survey and some generalizations of {B}essel processes.
\newblock {\em Bernoulli}, 9(2):313--349, 2003.

\bibitem{GraRyz}
I.~S. Gradshteyn and I.~M. Ryzhik.
\newblock {\em Table of integrals, series, and products}.
\newblock Elsevier/Academic Press, Amsterdam, seventh edition, 2007.

\bibitem{Vrins16}
M.~Jeanblanc and F.~Vrins.
\newblock Conic martingales from stochastic integrals.
\newblock {\em To appear in Mathematical Finance}, 2017.

\bibitem{jyc:3m}
M.~Jeanblanc, M.~Yor, and M.~Chesney.
\newblock {\em {Martingale Methods for Financial Markets}}.
\newblock Springer Verlag, Berlin, 2007.

\bibitem{Kaha08}
N.~Kahale.
\newblock Analytic crossing probabilities for certain barriers by {B}rownian
  motion.
\newblock {\em Annals of Applied Probability}, 18(4):1424--1440, 2008.

\bibitem{Prott05}
P.~Protter.
\newblock {\em {Stochastic Integration and Differential Equations}}.
\newblock Springer, Berlin, {S}econd edition, 2005.

\bibitem{RevYor99}
D.~Revuz and M.~Yor.
\newblock {\em Continuous martingales and Brownian motion}.
\newblock Springer-Verlag, New-York, 1999.

\bibitem{Salm88}
P.~Salminen.
\newblock On the first hitting time and the last exit time for a {B}rownian
  motion to/from a moving boundary.
\newblock {\em Advances in Applied Probability}, 20:411--426, 1988.

\bibitem{Sal97}
P.~Salminen.
\newblock On last exit decompositions of linear diffusions.
\newblock {\em Studia Sci. Math. Hungar.}, 33(1-3):251--262, 1997.

\bibitem{Shrev04}
S.E. Shreve.
\newblock {\em Stochastic Calculus for Finance vol. {II} - {C}ontinuous-time
  models}.
\newblock Springer, 2004.

\bibitem{Vrins16b}
F.~Vrins.
\newblock Characteristic function of time-inhomogeneous {L}\'evy-driven
  {O}rnstein-{U}hlenbeck processes.
\newblock {\em Statistics and Probability Letters}, 116:55--61, 2016.

\end{thebibliography}
	\bibliographystyle{plain}
\fi

\end{document}